\documentclass[10pt]{article}
\title{A Proximal Gradient Method for Nonconvex Nonsmooth Optimization with Applications} 
\author{Jos\'e M.M. Brito\thanks{School of Science, Great Bay University,
Dongguan 523000, Guangdong Province, People's Republic of China.
E-mail: marciomb@ufpi.edu.br. ORCID-ID: 0000-0003-4362-0536}
\and
Felipe Lara\thanks{Instituto de Alta Investigaci\'on (IAI), Universidad de
Tarapac\'a, Arica, Chile. E-mail: felipelaraobreque@gmail.com;
flarao@academicos.uta.cl.
Web: felipelara.cl, ORCID-ID: 0000-0002-9965-0921}
\and
Jinyun Yuan\thanks{School of Science, Great Bay University,
Dongguan 523000, Guangdong Province, People's Republic of China;
School of Computer Science and Technology, Dongguan University of Technology,
Dongguan 523808, Guangdong Province, People's Republic of China.
Corresponding author. E-mail: yuanjy570522@126.com.
ORCID-ID: 0000-0003-0358-2862}
}

\usepackage{amsmath}
\usepackage{amsthm}
\usepackage{amssymb}
\usepackage{multicol}
\usepackage[active]{srcltx}
\usepackage{algorithm}
\usepackage{array}
\usepackage[dvipsnames,svgnames]{xcolor}
\usepackage{url}
\usepackage{tikz}
\usepackage{graphicx}
\usepackage{subcaption} 
\usepackage{pgfplots}
\usepackage{tabularx}
\usepackage{booktabs} % for better tables
\usepackage[normalem]{ulem}
\usepackage{comment}
\usepackage[hidelinks]{hyperref}
\providecommand{\U}[1]{\protect \rule{.1in}{.1in}}
\newtheorem{theorem}{Theorem}[section]

\newtheorem{corollary}[theorem]{Corollary}

\theoremstyle{definition}
\newtheorem{definition}[theorem]{Definition}
\theoremstyle{plain}
\newtheorem{example}[theorem]{Example}

\newtheorem{lemma}[theorem]{Lemma}

\newtheorem{proposition}[theorem]{Proposition}
\theoremstyle{remark}
\newtheorem{remark}[theorem]{Remark}
\theoremstyle{plain}

\pgfplotsset{compat=1.18} 

\newcommand\R{\mathbb{R}}
\newcommand{\argmin}{{\rm argmin}}

\begin{document}

%\today

\maketitle

\begin{abstract}
\noindent We study a proximal-gradient method for the sum of a convex smooth function $g$ and a possibly nonconvex, nonsmooth, strongly quasar-convex regularizer. The gradient step is weighted by the quasar-convexity parameter, so, for step sizes in the descent range, the iteration decreases a weighted objective, while the unweighted sum serves for comparison. We establish a geometric envelope around every global minimizer of the unweighted objective, with an explicit radius independent of the step size; a common minimizer yields global linear convergence. We also prove that every critical point $u$ of the weighted objective lies within an explicit multiple of $\|\nabla g(u)\|$ from every global minimizer of the unweighted objective. Together with an optimized composite comparison, this yields asymptotic distance bounds. An example shows that the two objectives can have different critical points. For power-regularized least squares with powers between zero and one, descent provides a confinement ball for the sharp local modulus, and support identification combined with a KL argument yields whole-sequence convergence.

\medskip

\noindent{\small \emph{Keywords}: Nonconvex optimization; Proximal gradient algorithms; Quasar-convexity; Nonconvex regularization; Least squares}

\medskip

\noindent {\bf Mathematics Subject Classification:} 90C26; 90C30.
\end{abstract}

\section{Introduction}

Composite optimization provides a natural formulation for problems in which a smooth data-fitting term is combined with a nonsmooth regularizer. Proximal-gradient methods exploit this structure by separating a gradient step from a proximity step. Nonconvex penalties are of particular interest in sparse approximation, where they induce different sparsity patterns and estimation biases from convex regularizers.  Their analysis requires care: the proximity operator can be
set-valued, stationary points need not be globally optimal, and guarantees derived from convexity are no longer available~\cite{ABS,BB,BST,LY}.

Quasar-convexity offers a different source of structure by controlling a function along segments directed toward a global minimizer. Smooth quasar-convex optimization admits efficient first-order methods~\cite{GGK,HSS,WW}. For nonsmooth functions, the proximal-point analysis in \cite{BLL} establishes global convergence and linear rates under strong quasar-convexity. The proximal comparison inequality relative to the quasar-convex center in \cite[Proposition~15]{BLL} is a starting point for the present work. Proximal quasar-convexity has also been introduced for composite models with convex regularizers and random zeroth-order oracles~\cite{FPBS}. Our setting differs because the regularizer itself may be nonconvex and its quasar-convex center need not minimize the full composite objective.

The composite analysis in~\cite[Section~3.3]{HADR} considers an
accelerated proximal-gradient method for a strongly quasar-convex
smooth term and a convex nonsmooth term. The composite convergence
result is restricted to the quasar-convexity parameter equal to
one ($\kappa=1$ in our notation) and requires the smooth term to
satisfy the extended strong quasar-convexity inequality at a global
minimizer of the sum, together with a lower-curvature condition.
The location of this reference point is essential, since adding
a convex term may displace the minimizer from the original
quasar-convex center. In our setting, the smooth term is convex,
while the regularizer may be nonconvex and nonsmooth. We retain
its own quasar-convex center and derive proximal comparisons
valid at arbitrary reference points.

We analyze the splitting iteration
\[
 x_{k+1}\in {\rm Prox}_{\beta_k h}
 \bigl(x_k-\kappa\beta_k\nabla g(x_k)\bigr),
 \qquad 0<\kappa\leq1,
\]
where $g$ is convex with a Lipschitz continuous gradient and $h$ is proper, lower semicontinuous, and strongly quasar-convex. The factor $\kappa$ in the gradient step is chosen to match the
coefficient arising from the proximal comparison for a strongly
quasar-convex regularizer. Both component comparisons then carry
the factor $2\kappa\beta_k$, and their sum yields estimates relative
to global minimizers of $f=g+h$, which need not coincide with the
quasar-convex center of $h$. This choice allows us to exploit the geometry of the regularizer without imposing quasar-convexity on the composite objective. For steps in the descent range, the iteration decreases $F_\kappa=\kappa g+h$, so its critical-point convergence concerns this weighted objective. We thus distinguish
the objective decreased by the algorithm from the unweighted objective used in the global-minimizer comparisons.

Our first main result combines geometric and asymptotic distance estimates relative to global minimizers of $f$. For every constant step in the descent range, a geometric envelope controls the distance to a ball centered at any such minimizer, with an explicit radius independent of the step size.
A common minimizer of $g$ and $h$ yields global linear convergence for every proximal selection. Our second main result is a comparison at critical points: every critical point $u$ of $F_\kappa$ lies within an explicit multiple of $\|\nabla g(u)\|$ from every global minimizer of $f$. Optimizing the free parameter in the composite inequality recovers this same comparison constant when $\mu\leq\kappa \gamma/(2-\kappa)$, where $\mu$ and $\gamma$ denote, respectively, the strong-convexity modulus of $g$ and the quasar-convexity modulus of $h$ (Assumption~(H), Section~\ref{sec:03}); for larger values of $\mu$, the critical-point comparison supplies the stronger asymptotic estimate. An example with a nonconvex regularizer shows that critical points of $F_\kappa$ need not be critical for $f$.

For the application $h(x)=\rho\|x\|_p^p$, $0<p<1$, strong quasar-convexity is available only on bounded sets. We recall its sharp modulus on a Euclidean ball and derive the required ball directly from the initial weighted objective value, resolving the confinement requirement without modifying the algorithm. To complete the application, we apply established thresholding and restricted-KL arguments \cite{ABS,ZLWX,ZLX} to verify whole-sequence convergence to a critical point of $F_\kappa$. These properties complement the comparison estimates and do not imply global sparse recovery.

Section~\ref{sec:02} presents the preliminaries, and Section~\ref{sec:03} develops the proximal comparison inequalities and convergence estimates. Section~\ref{sec:04} applies the results to power-regularized least squares, followed by the conclusions in Section~\ref{sec:conclusions}.

\section{Preliminaries and Basic Definitions} \label{sec:02}

The inner product on $\mathbb{R}^{n}$ and the Euclidean norm are denoted by $\langle \cdot,\cdot \rangle$ and $\| \cdot \|$, respectively. The open ball with center at $x_{0} \in \mathbb{R}^{n}$ and radius $\varepsilon > 0$ is denoted by $\mathbb{B}(x_{0}, \varepsilon)$. {For $r\geq0$, we write $\overline{\mathbb B}(x_0,r):=\{x\in\mathbb R^n:\|x-x_0\|\leq r\}$ for the closed ball, and also use $B$ in place of $\mathbb B$. We set $(t)_+:=\max\{t,0\}$ for $t\in\mathbb R$.} We write $\mathbb{R}_{+}:=[0, + \infty[$ and $\mathbb{R}_{++} := \, ]0, + \infty[$. For a nonempty set $K\subset\mathbb{R}^{n}$, we denote its closure by $\overline{K}$. 
%its boundary by ${\rm bd}\,K$, its topological interior by ${\rm int}\,K$ 
%its convex hull by ${\rm conv}\,K$, 
%Given a convex and closed set $K$, the projection of $x$ on $K$ is denoted by $P_{K} (x)$, and the indicator function on $K$ by $\iota_{K}$. 
%The closed ball with cen\-ter at $x$ and radius $\delta > 0$ is denoted by $\mathbb{B} (x, \delta)$. 
%and the vector with $1$ in the $i$th coordinate and $0$'s elsewhere is denoted by $e_{i}$.
%and its asymptotic (recession) cone by $K^{\infty} := \{ u \in \mathbb{R}^{n}: ~ \exists ~ t_{k} \rightarrow + \infty, ~ \exists~ x_{k} \in K, ~ \frac{x_{k}}{t_{k}} \rightarrow u \}$. %Furthermore, for any set $K \subset \mathbb{R}^{n}$, it follows from \cite[Proposition 2.1.2]{AT} that
%\begin{equation}\label{basic:rec}
%$K^{\infty}=\{0\}$ if and only if $K$ is bounded.
%\end{equation}

A nonempty set $K \subset \mathbb{R}^{n}$ is said to be star-shaped at $x_{0} \in K$ if, for every $x \in K$, the segment $[x_{0}, x] \subset K$.

Given any $u,v, w\in \mathbb{R}^{n}$ and any $\theta \in \mathbb{R}$, the following relations hold: 
\begin{align}
 & \langle u - w,\, v - u \rangle
   = \frac{1}{2}\| w - v\|^{2}
   - \frac{1}{2}\| u - w\|^{2}
   - \frac{1}{2}\| v - u\|^{2},
   \label{3:points} \\[4pt]
 & \| \theta u + (1-\theta)v\|^{2}
   = \theta\| u\|^{2}
   + (1 - \theta)\| v\|^{2}
   - \theta(1 - \theta)\| u - v\|^{2}.
   \label{iden:1}
\end{align}

Given any extended-valued function $h: \mathbb{R}^{n} \rightarrow \overline{\mathbb{R}}$, the effective domain of $h$ is defined by ${\rm dom}\,h := \{x \in \mathbb{R}^{n}: h(x) < + \infty \}$. The function $h$ is proper if ${\rm dom}\,h$ is nonempty and $h(x) > - \infty$ for all $x \in \mathbb{R}^{n}$. The notion of properness is important when dealing with minimization pro\-blems.

We denote by
$S_{\lambda}(h):=\{x\in\mathbb{R}^{n}:h(x)\leq\lambda\}$
the sublevel set of $h$ at level $\lambda\in\mathbb{R}$,
and by $\argmin_{\mathbb{R}^{n}}h$ the set of its global minimizers. A function $h$ is lower semicontinuous (lsc) at $\overline{x} \in \mathbb{R}^{n}$ if, for every sequence $\{x_k\}_{k} \subset \mathbb{R}^{n}$ with $x_k \rightarrow \overline{x}$, we have $h(\overline{x}) \leq \liminf_{k \rightarrow + \infty} h(x_k)$. %Furthermore, the usual con\-ven\-tion  $\sup_{\emptyset} h := - \infty$ and $\inf_{\emptyset} h := + \infty$ is adopted.

A proper function $h: \mathbb{R}^{n} \rightarrow \overline{\mathbb{R}}$ is coercive if
\[
 \lim_{\|x\|\rightarrow+\infty} h(x)=+\infty,
\]
or equivalently, if $S_\lambda(h)$ is bounded for every $\lambda \in \mathbb{R}$.

The following generalized convexity notion has been used in several applications in machine learning {because it supports convergence guarantees for gradient methods} (see \cite{GGK,HMR,HADR,HSS,WW} and references therein).

Motivated by \cite[Lemma 10]{HSS}, we define quasar-convexity as follows:

\begin{definition}\label{def:quasar}
 Let $\kappa \in \, ]0, 1]$, $\gamma \geq 0$, let $h: \mathbb{R}^n \to \overline{\mathbb{R}}$ be proper, and let $K\subset\mathbb{R}^n$ be star-shaped at $\overline{x}\in K\cap \argmin_{ \mathbb{R}^n}\,h$. We say that $h$ is $(\kappa, \gamma)$-strongly quasar-convex on $K$ with respect to $\overline{x}$ if
\[
 h(\lambda\overline{x}+(1-\lambda)x) \leq \kappa \lambda h(\overline{x}) + (1-\kappa\lambda) h(x) - \lambda \left(1-\frac{\lambda}{2-\kappa} \right) \frac{\kappa\gamma}{2}\|x-\overline{x}\|^2
\]
for all $\lambda\in[0,1]$ and all $x\in K\cap{\rm dom}\,h$. When $K=\mathbb{R}^n$, we omit the reference to $K$; when $\gamma=0$, we say that
$h$ is $\kappa$-quasar-convex.
\end{definition}

In particular, $K\cap{\rm dom}\,h$ is star-shaped at $\overline{x}$, since the
right-hand side is finite for $x\in K\cap{\rm dom}\,h$.

If $h$ is $(\kappa,\gamma)$-strongly quasar-convex on $K$ with respect to $\overline{x}$ and $\gamma > 0$, then $\overline{x}$ is the unique minimizer of $h$ on $K$, by the quadratic-growth inequality \eqref{qwc:sconvex} below; in the global case, $\argmin_{\mathbb{R}^n}h=\{\overline{x}\}$. If $\gamma=0$, then $K\cap\argmin_{\mathbb{R}^n}h$ is star-shaped at $\overline{x}$ (see \cite[Observation 3]{HSS}). For $\kappa=1$, (strongly) quasar-convex functions are (strongly) star-convex \cite{NP-2006}. In particular, every (strongly) convex function with modulus $\gamma\geq0$ that attains its minimum is $(1,\gamma)$-strongly quasar-convex with respect to any of its minimizers.

The relationship between convex/star-convex/quasar-convex notions is su\-mma\-ri\-zed below:
 \begin{align}\label{scheme}
  \begin{array}{ccccccc}
  {\rm strongly ~ convex} & \overset{*}{\Longrightarrow} & {\rm strongly ~ star{\rm -}convex} & \overset{*}{\Longrightarrow} & {\rm strongly ~ quasar{\rm -}convex}  \notag \\
  \Downarrow & \, & \Downarrow & \, & \Downarrow  \notag \\
  {\rm convex} & \overset{*}{\Longrightarrow} & {\rm star{\rm -}convex} & \overset{*}{\Longrightarrow} & {\rm quasar{\rm -}convex},
  \end{array}
 \end{align}
 where ``$*$" denotes that $\argmin_{\mathbb{R}^{n}\,}h \neq \emptyset$ is required. {None of the reverse implications holds in general} (see \cite{GGK,HADR,HSS,NP-2006}).

\begin{remark}%\label{not:included}
 {Neither (strong) quasiconvexity nor (strong) quasar-convexity implies the other in general}. Indeed, the function in {\cite[Example 11]{HLMV}} is strong\-ly quasiconvex but not quasar-convex, while the function in \cite[Example 8]{BLL} is strongly quasar-convex {on a ball in dimension at least two} but not quasiconvex. For differentiable func\-tions, sufficient conditions under which strong quasiconvexity implies quasar-convexity are provided in \cite{HLMV,LV}.
\end{remark}

A function $h: \mathbb{R}^{n} \rightarrow \mathbb{R}$ is said to be $L$-smooth on an open set $K \subset \mathbb{R}^{n}$ if it is differentiable on
$K$ and
\begin{equation}\label{L:smooth}
 \| \nabla h(x) - \nabla h(y) \| \leq L \| x - y \|, ~ \forall ~ x, y \in K.
\end{equation}

For $L$-smooth functions, a fundamental result is the descent lemma: if $h$ is $L$-smooth on a convex set $K$, with $L\geq0$, then for every $x,y\in K$, we have
\begin{equation}\label{descent:lemma}
 h(y) \leq h(x) + \langle \nabla h(x), y - x \rangle + \frac{L}{2} \| x - y \|^{2}.
\end{equation}

Also, if $h$ is $(\kappa,\gamma)$-strongly quasar-convex with $\gamma>0$, then {using $h(\overline{x})\leq h(\lambda\overline{x}+(1-\lambda)y)$ in Definition~\ref{def:quasar} and letting $\lambda\uparrow1$ gives}
\begin{equation}\label{qwc:sconvex}
 h(\overline{x}) + \frac{\kappa \gamma}{2(2 - \kappa)} \| y - \overline{x} \|^{2} \leq h(y), ~ \forall ~ y \in K,
\end{equation}
where $\overline{x} \in \argmin_{\mathbb{R}^{n}}\,h$. Thus, strongly quasar-convex functions satisfy a quadratic growth condition with modulus $\frac{\kappa \gamma}{2(2 - \kappa)} > 0$. %a modulus which reduces to the strongly convex one $\frac{\gamma}{2}$ when $\kappa = 1$.

In the differentiable case, we have the following useful characterization: a differentiable function $h: \mathbb{R}^{n} \rightarrow \mathbb{R}$ is $(\kappa, \gamma)$-strongly quasar-convex on $\mathbb{R}^n$ ($\kappa \in \, ]0, 1]$ and $\gamma \geq 0$) with respect to $\overline{x} \in \argmin_{\mathbb{R}^{n}}\,h$ if and only if (see \cite[Lemma 10]{HSS})
\begin{equation}\label{diff:squasar}
 h(\overline{x}) \geq h(y) + \frac{1}{\kappa} \langle \nabla h(y), \overline{x} - y \rangle + \frac{\gamma}{2} \| y - \overline{x} \|^{2}, ~ \forall ~ y \in \mathbb{R}^{n}.
\end{equation}
 Hence, quasi-strongly convex functions \cite{NNG} are related to strongly quasar-convex functions.
 Further properties of differentiable (strongly) quasar-convex functions may be found in \cite{GGK,HMR,HADR,HSS,WW}, among others.

Let $K\subset\mathbb R^n$ be nonempty, closed and convex, let $h:\mathbb R^n\to\overline{\mathbb R}$ be proper and lsc, and let $\beta>0$. The proximity operator ${\rm Prox}_{\beta h}(K,\cdot):\mathbb R^n\rightrightarrows\mathbb R^n$ of $h$ on $K$ with parameter $\beta$ is defined by
\begin{equation}\label{prox:operator}
 {\rm Prox}_{\beta h} (K, z) = \mathrm{argmin}_{x \in K} \left\{ h(x) + \frac{1}{2 \beta} \| z - x \|^{2} \right\}.
\end{equation}
When $K = \mathbb{R}^{n}$, we simply write ${\rm Prox}_{\beta h} (z) := {\rm Prox}_{\beta h} (\mathbb{R}^{n}, z)$. If $h$ is proper, lsc and convex, then ${\rm Prox}_{\beta h}$ is single-valued (see, for instance, \cite[Proposition
12.15]{BC-2}).

{
For $x\in\operatorname{dom}h$, the Fr\'echet subdifferential is
\[
 \widehat\partial h(x)=\left\{v\in\mathbb R^n:
 \liminf_{\substack{z\to x\\z\ne x}}
 \frac{h(z)-h(x)-\langle v,z-x\rangle}{\|z-x\|}\geq0\right\}.
\]
The limiting subdifferential consists of limits $v_j\to v$ with
$v_j\in\widehat\partial h(x_j)$, $x_j\to x$ and $h(x_j)\to h(x)$.
}

The following notion extends the classical concept of a stationary point to the possibly nonsmooth setting considered here. 

Given a proper lsc function $h: \mathbb{R}^{n} \rightarrow \overline{\mathbb{R}}$, we denote by $\partial^{L} h(x)$ its (limiting) subdifferential at $x \in {\rm dom}\,h$ (see {\cite{RW} for the limiting subdifferential}, and \cite{ABS} and the references therein for the general lsc case, which covers functions such as $\lvert x \rvert^{p}$, $0<p<1$, failing to be locally Lipschitz at the origin). 

Recall that $\partial^{L} h(x) = \{\nabla h(x)\}$ whenever $h$ is {continuously differentiable near $x$}, that the sum rule
\[
 \partial^{L} (\phi + h)(x) = \nabla \phi(x) + \partial^{L} h(x)
\]
holds for every $\phi \in C^{1}(\mathbb{R}^{n})$ and every $x \in {\rm dom}\,h$, and that $\partial^{L} h$ has the following closedness property: if $x_{k} \rightarrow x$, {$h(x_k)\rightarrow h(x)$}, $w_{k} \in \partial^{L} h(x_{k})$ and $w_{k} \rightarrow w$, then $w \in \partial^{L} h(x)$.

Given $\kappa \in \, ]0, 1]$, a point $\overline{x} \in \mathbb{R}^{n}$ is said to be a $\kappa$-critical point of $f = g + h$ if
\begin{equation}\label{crit:point}
 0 \in \kappa \nabla g(\overline{x}) + \partial^{L} h(\overline{x}),
\end{equation}
that is, $\overline{x}$ is a critical point, in the sense above, of $F_{\kappa} := \kappa g + h$. When $\kappa = 1$, \eqref{crit:point} reduces to the usual notion of a critical point of $f = g + h$.

\begin{definition}[Kurdyka--\L{}ojasiewicz property]\label{def:KL}
Let $\sigma:\mathbb R^n\to\overline{\mathbb R}$ be proper and lsc, and write
\[
 \operatorname{dist}(0,\partial^L\sigma(u)):=\inf\{\|v\|:v\in\partial^L\sigma(u)\}
\]
(with $\inf\emptyset:=+\infty$). For $\eta\in\,]0,+\infty]$, let $\Phi_\eta$ denote the class of concave functions $\varphi:[0,\eta[\,\to[0,+\infty[$ such that $\varphi(0)=0$, $\varphi$ is continuously differentiable on $\,]0,\eta[\,$ and continuous at $0$, and $\varphi'(s)>0$ for every $s\in\,]0,\eta[$.

We say that $\sigma$ has the \emph{Kurdyka--\L{}ojasiewicz (KL) property} at $\overline u\in{\rm dom}\,\partial^L\sigma$ if there exist $\eta\in\,]0,+\infty]$, a neighborhood $U$ of $\overline u$, and $\varphi\in\Phi_\eta$ such that
\[
 \varphi'\bigl(\sigma(u)-\sigma(\overline u)\bigr)\,\operatorname{dist}(0,\partial^L\sigma(u))\geq1
\]
for every $u\in U$ with $\sigma(\overline u)<\sigma(u)<\sigma(\overline u)+\eta$. If $\varphi$ can be taken of the form $\varphi(s)=\dfrac{c}{1-\theta}s^{1-\theta}$ for some $c>0$ and $\theta\in[0,1[$, we say that $\sigma$ satisfies the KL inequality at $\overline u$ with exponent $\theta$; the defining inequality then reduces to
\[
 |\sigma(u)-\sigma(\overline u)|^{\theta}\leq c\,\operatorname{dist}(0,\partial^L\sigma(u))
\]
for every such $u$. A proper lsc function with the KL property at every point of ${\rm dom}\, \partial^L\sigma$ is called a KL function. Real-analytic functions are KL functions, with exponent $\theta \in [1/2,1[$ at every point by the \L{}ojasiewicz gradient inequality; see \cite{ABS} and the references therein.
\end{definition}

For further background on generalized convexity, proximal point methods and the KL property, we refer to \cite{ABS,ADSZ,BC-2,BST,CMA,HKS,Lara-9,LY,P} and references therein.

\section{The Proximal-Gradient Algorithm} \label{sec:03}

Throughout this section, we use the following general assumptions:
\begin{enumerate}
\renewcommand{\theenumi}{H\arabic{enumi}}
\renewcommand{\labelenumi}{(\theenumi)}
 \item\label{hyp:smooth} $g\in C^1(\mathbb R^n)$ is $\mu$-strongly convex with an $L$-Lipschitz gradient, where $L>0$ and $0\leq\mu\leq L$.
 
 \item\label{hyp:regularizer} $h:\mathbb R^n \to \mathbb R\cup\{+\infty\}$ is proper, lower semicontinuous, and  $(\kappa, \gamma)$-strongly quasar-convex with respect to $\overline x_h \in \argmin_{\mathbb R^n}h$, where $0 < \kappa \leq1$ and $\gamma>0$.
\end{enumerate}
We refer to (\ref{hyp:smooth})--(\ref{hyp:regularizer}) together as (H).
Step-size restrictions and additional conditions are stated in each result.
Write
\[
 f=g+h,\qquad F_\kappa=\kappa g+h,\qquad
 \overline x_h \in \argmin\,h, \qquad x^* \in \argmin\,f.
\]
The quadratic-growth inequality guarantees that $\overline x_h$ is unique.
Supporting affine minorants of $g$ and quadratic growth of $h$ make both
objectives coercive, so their minimizers exist. The proximal objective is
also lower semicontinuous and coercive, hence its set of global minimizers
is nonempty and compact. All results hold for every proximal selection.
The iteration below is standard proximal gradient for $F_\kappa$;
$f$ is the comparison objective. The symbol $\mu$ denotes the strong-convexity modulus of $g$
throughout the paper, while $\rho$ denotes the regularization weight in Section~\ref{sec:04}.

\begin{algorithm}[H]
\caption{{Weighted Proximal-Gradient Algorithm with a Strongly Quasar-Convex Regularizer (PG-SQC)}} \label{alg:pg-sqcv}
\begin{description}
 \item[Step 0.] Take $x_{1} \in{\rm dom}\,(h+g)$, set $k=1$, and let $\{\beta_{k}\}_{k\in\mathbb{N}} \subset \mathbb{R}_{++}$.
 \item[Step 1.] Compute
 \begin{align}
  & ~~~ y_{k} = x_{k} - \kappa \beta_{k} \nabla g(x_{k}), \label{step:grad} \\
  & x_{k+1}\in {\rm Prox}_{\beta_{k} h}(y_{k}). \label{step:prox}
 \end{align}
 \item[Step 2.] If $x_{k+1} = x_{k}$,  then \textsc{Stop} and return $x_{k}$. Otherwise, set $k = k+1$ and go to Step 1.
\end{description}
\end{algorithm}

We discuss the stopping criterion below.

\begin{remark}[Exact stopping criterion]\label{rem:stop}
 If the stopping condition in Step~2 is satisfied at some iteration, i.e., $x_{k+1}=x_k$ for the step size $\beta_k>0$ used there, then $x_k$ is already a $\kappa$-critical point of $f=g+h$ in the sense of \eqref{crit:point}. Indeed, proximal optimality of
$x_{k+1}\in{\rm Prox}_{\beta_k h}(y_k)$ gives
\[
 0\in\partial^{L} h(x_{k+1})+\frac{1}{\beta_k}(x_{k+1}-y_k).
\]
Substituting $x_{k+1}=x_k$ and $y_k=x_k-\kappa \beta_k \nabla g(x_k)$ yields $\dfrac{x_k-y_k}{\beta_k} = \kappa\nabla g(x_k)$, hence
\[
 0 \in \kappa \nabla g(x_k)+\partial^{L} h(x_k),
\]
which is exactly \eqref{crit:point} at $\overline x=x_k$. Thus, whenever Step~2 is satisfied, the returned iterate is a critical point of $F_\kappa$, regardless of the admissible step-size schedule $\{\beta_k\}_k$. This is consistent with the convention, used throughout Sections \ref{sec:03}--\ref{sec:04}, of extending the sequence constantly once the algorithm stops.
\end{remark}

As a first result, we present {a proximal comparison inequality valid at arbitrary reference points}.

\begin{theorem} \label{lemma1}
 {Assume (\ref{hyp:regularizer}), let $\beta_k>0$ and $y_k\in\mathbb R^n$, and choose $x_{k+1}\in{\rm Prox}_{\beta_k h}(y_k)$. Then, for every $\delta>0$ and every $x\in{\rm dom}\,h$, we have}
 \begin{equation}\label{ineq3}
 \begin{aligned}
  2\beta_k\kappa\big(h(x_{k+1})-h(x)\big)
  &\leq\|y_k-x\|^2
       -\left(1+\frac{\beta_k\kappa^2\gamma}{2+\delta}\right)
        \|x_{k+1}-x\|^2\\
  &\quad +C_k\|x_{k+1}-y_k\|^2,
 \end{aligned}
\end{equation}
where
$$ C_k :=  \frac{ (2-\kappa) \big(2 + \delta - \kappa \big)}{\beta_k \gamma \kappa^2 \delta}-1.$$
\end{theorem}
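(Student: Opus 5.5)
The plan is to follow the template of the proximal comparison in \cite[Proposition~15]{BLL}: use the variational inequality defining $x_{k+1}\in{\rm Prox}_{\beta_k h}(y_k)$ at a point on the segment toward the quasar-convex center $\overline x_h$, and then eliminate every occurrence of $\overline x_h$. This elimination uses Young's inequality with the free parameter $\delta$, so that only $x$, $y_k$ and $x_{k+1}$ remain.

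\textbf{Step 1 (prox minimality).} Fix $\lambda\in[0,1]$ and put $z_\lambda:=\lambda\overline x_h+(1-\lambda)x$. Since $x\in{\rm dom}\,h$, Definition~\ref{def:quasar} shows that $z_\lambda\in{\rm dom}\,h$. Minimality of $x_{k+1}$ in \eqref{prox:operator} gives
\[
 h(x_{k+1})+\tfrac{1}{2\beta_k}\|x_{k+1}-y_k\|^2\leq h(z_\lambda)+\tfrac{1}{2\beta_k}\|z_\lambda-y_k\|^2 .
\]

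\textbf{Step 2 (quasar-convexity and growth).} I bound $h(z_\lambda)$ by the inequality of Definition~\ref{def:quasar}. This yields $(1-\kappa\lambda)h(x)+\kappa\lambda h(\overline x_h)$ minus the term $\lambda(1-\frac{\lambda}{2-\kappa})\frac{\kappa\gamma}{2}\|x-\overline x_h\|^2$. To let $h(x_{k+1})-h(x)$ appear with the weight $\kappa$ of \eqref{ineq3}, I then replace $h(\overline x_h)$ using the growth inequality \eqref{qwc:sconvex} at $y=x_{k+1}$:
\[
 h(\overline x_h)\leq h(x_{k+1})-\tfrac{\kappa\gamma}{2(2-\kappa)}\|x_{k+1}-\overline x_h\|^2 .
\]
The coefficients of $h$ then collect into $(1-\kappa\lambda)\bigl(h(x_{k+1})-h(x)\bigr)$ on the left. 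I expect the natural choice to be $\lambda=1$, in which case the left side carries exactly $(1-\kappa)$ times the $h$-difference. If that factor does not match \eqref{ineq3}, I will instead keep both $\lambda$ and $\overline x_h$ and optimize $\lambda$ at the end. The factor $(2-\kappa)$ in $C_k$ strongly suggests that the growth modulus $\frac{\kappa\gamma}{2(2-\kappa)}$ enters exactly once.

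\textbf{Step 3 (eliminating $\overline x_h$).} Expand $\|z_\lambda-y_k\|^2$ and regroup with \eqref{3:points} and \eqref{iden:1}. The relevant pieces are $\|y_k-x\|^2$, $\|x_{k+1}-x\|^2$ and $\|x_{k+1}-y_k\|^2$, together with negative quadratics $-a\|x_{k+1}-\overline x_h\|^2$ and $-b\|x-\overline x_h\|^2$ and cross terms involving $\overline x_h$. Write $x_{k+1}-\overline x_h=(x_{k+1}-y_k)+(y_k-\overline x_h)$ and $\|x_{k+1}-x\|^2\leq(1+\epsilon)\|x_{k+1}-\overline x_h\|^2+(1+\epsilon^{-1})\|x-\overline x_h\|^2$. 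Then apply Young's inequality with parameter $\delta$ to absorb every term containing $\overline x_h$ into the negative quadratics coming from strong quasar-convexity.

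This trade explains the structure of the constants. The gain $\frac{\beta_k\kappa^2\gamma}{2+\delta}\|x_{k+1}-x\|^2$ is what survives from the $\gamma$-terms, and the price is the coefficient $\frac{(2-\kappa)(2+\delta-\kappa)}{\beta_k\gamma\kappa^2\delta}$ on $\|x_{k+1}-y_k\|^2$. The $-1$ in $C_k$ comes from the term $\|x_{k+1}-y_k\|^2$ already present on the left of Step~1. The final step is to multiply by $2\beta_k$ and divide by the weight of the $h$-difference to obtain the factor $2\beta_k\kappa$.

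\textbf{Main obstacle.} The hard part is Step~3, namely choosing $\lambda$ and the Young weights so that all $\overline x_h$-dependence cancels and the coefficients come out exactly as $1+\frac{\beta_k\kappa^2\gamma}{2+\delta}$ and $C_k$. The choice of $\lambda$ in Step~2 is also still open, and it determines whether the $h$-coefficient is exactly $\kappa$. My first attempt will be $\lambda=1$ with Young weights $\delta$ and $1$, using $2+\delta-\kappa=(2-\kappa)+\delta$ as the guide. I will then adjust $\lambda$ by matching coefficients if the resulting constants do not reproduce \eqref{ineq3}.
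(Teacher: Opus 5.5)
There is a genuine gap, and it lies in Steps~1--2, before any choice of $\lambda$ or of Young weights. Your comparison points $z_\lambda$ lie on the segment $[x,\overline x_h]$, you apply quasar-convexity at $x$, and you apply growth at $x_{k+1}$. None of these carries first-order information about the proximal objective at its minimizer $x_{k+1}$. That information is what produces $-\|x_{k+1}-x\|^2$ with coefficient $1+\beta_k\kappa^2\gamma/(2+\delta)$ for \emph{every} $\beta_k>0$.

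To see that no Step~3 can repair this, take $x=\overline x_h$. Then $z_\lambda=\overline x_h$ for all $\lambda$, quasar-convexity at $x$ is vacuous, and everything you collect is implied by
\[
 2\beta_k D\leq\|y_k-\overline x_h\|^2-\|x_{k+1}-y_k\|^2,
 \qquad
 D\geq\frac{\kappa\gamma}{2(2-\kappa)}\|x_{k+1}-\overline x_h\|^2,
 \qquad D:=h(x_{k+1})-h(\overline x_h).
\]
The values $x_{k+1}=y_k\neq\overline x_h$, $D=\|y_k-\overline x_h\|^2/(2\beta_k)$ and $\beta_k\leq(2-\kappa)/(\kappa\gamma)$ satisfy both relations. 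Yet \eqref{ineq3} would then give $2\beta_k\kappa D\leq-\frac{\beta_k\kappa^2\gamma}{2+\delta}\|y_k-\overline x_h\|^2<0$, which is false. For a true proximal point this configuration is ruled out only by the first-order radial condition at $x_{k+1}$ (Lemma~\ref{lem:radial-subgradient} with $v=0$), which your scheme never uses.

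Your weight bookkeeping also fails. With $\lambda=1$ the factor $1-\kappa\lambda$ vanishes when $\kappa=1$. Rescaling $1-\kappa\lambda$ into $\kappa$ multiplies the coefficient of $\|y_k-x\|^2$ by $\kappa/(1-\kappa\lambda)$, which breaks the form of \eqref{ineq3}. Your reading of the constants is right: the $-1$ in $C_k$ does come from $\|x_{k+1}-y_k\|^2$, and the growth modulus enters once. The mechanism producing them is what is missing.

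The missing idea is to compare $x_{k+1}$ with $z_t=(1-t)x_{k+1}+t\overline x_h$, on the segment from the proximal point toward the center. Apply Definition~\ref{def:quasar} at $x_{k+1}$, expand $\|z_t-y_k\|^2$ by \eqref{iden:1}, divide by $t$ and let $t\downarrow0$. This yields
\[
 2\beta_k\kappa\bigl(h(x_{k+1})-h(\overline x_h)\bigr)
 \leq\|y_k-\overline x_h\|^2-\|x_{k+1}-y_k\|^2-(1+\beta_k\kappa\gamma)\|x_{k+1}-\overline x_h\|^2,
\]
in which the weight $\kappa$ appears automatically. The remaining steps are then as follows.
\begin{itemize}
\item Use growth \eqref{qwc:sconvex} at $x$, not at $x_{k+1}$: $h(\overline x_h)\leq h(x)-\frac{\kappa\gamma}{2(2-\kappa)}\|x-\overline x_h\|^2$ passes from $h(\overline x_h)$ to $h(x)$.
\item Recenter $\|y_k-\overline x_h\|^2-\|x_{k+1}-\overline x_h\|^2$ at $x$; this leaves the cross term $2\langle y_k-x_{k+1},x-\overline x_h\rangle$.
\item Split the remaining $-\beta_k\kappa\gamma\|x_{k+1}-\overline x_h\|^2$ as $-\beta_k\kappa\gamma\theta\|x_{k+1}-x\|^2+\frac{\beta_k\kappa\gamma\theta}{1-\theta}\|x-\overline x_h\|^2$ with $\theta=\kappa/(2+\delta)$.
\item Apply Young's inequality with weight $\varepsilon_k=\beta_k\gamma\kappa^2\delta/\bigl((2-\kappa)(2+\delta-\kappa)\bigr)$. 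This cancels the $\|x-\overline x_h\|^2$ terms exactly and leaves the coefficient $1/\varepsilon_k-1=C_k$ on $\|x_{k+1}-y_k\|^2$.
\end{itemize}
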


\begin{proof}
 By proximal optimality, for every $z\in {\rm dom}\,h$,
\begin{equation}\label{opti}
h(x_{k+1})+\frac{1}{2\beta_k}\| x_{k+1}-y_k\|^2
\leq h(z)+\frac{1}{2\beta_k}\| z-y_k\|^2.
\end{equation}
 Fix $t \in \, ]0,1]$ and set $z_t = (1-t)x_{k+1} + t \overline{x}_{h}$. {Since $h$ is $(\kappa,\gamma)$-strongly quasar-convex}, we have
\begin{equation}\label{qsc1}
 h(z_t) \le h(x_{k+1})-\kappa t\big( h(x_{k+1})-h(\overline{x}_{h}) \big) - t\Big( 1 - \tfrac{t}{2-\kappa} \Big) \frac{\kappa \gamma}{2} \| x_{k+1} - \overline{x}_{h} \|^2.
\end{equation}
Using \eqref{iden:1}, we obtain 
\begin{equation}\label{norm1}
 \| z_t - y_k \|^2 = t \| \overline{x}_{h} - y_k \|^2 + (1-t) \| x_{k+1} - y_k \|^2 - t(1-t) \| x_{k+1} - \overline{x}_{h} \|^2.
\end{equation}
 Taking $z = z_{t}$ in \eqref{opti}, and using \eqref{qsc1} and \eqref{norm1}, we have
\begin{align*}
 2t \beta_k \kappa \big( h(x_{k+1}) - h(\overline{x}_{h}) \big) \leq & ~ t \| y_k - \overline{x}_{h} \|^2 -t \| x_{k+1} - y_{k} \|^2 -t(1-t) \| x_{k+1} - \overline{x}_{h} \|^2 \\ 
 & - {2\beta_k}t \Big( 1-\tfrac{t}{2- \kappa} \Big) \frac{\kappa \gamma}{2} \| x_{k+1} - \overline{x}_{h} \|^2.
\end{align*}
Dividing by $t>0$ and letting $t\downarrow 0$ yields
$$ 2\beta_k \kappa \big( h(x_{k+1}) - h(\overline{x}_{h}) \big) \leq \| y_{k} - \overline{x}_{h} \|^2 - \| x_{k+1}- y_k \|^2 - \| x_{k+1} - \overline{x}_{h} \|^2 - \beta_{k} \kappa \gamma \| x_{k+1} - \overline{x}_{h} \|^2. $$
Adding and subtracting $x$ inside the two squared norms gives
\begin{align*}
 & \hspace{0.4cm} \| y_k-\overline{x}_{h} \|^2 = \| y_k - x \|^2 + 2 \langle y_k-x, x-\overline{x}_{h} \rangle + \| x-\overline{x}_{h} \|^2, \\ 
 &  \| x_{k+1}-\overline{x}_{h} \|^2 = \| x_{k+1}-x\|^2 + 2 \langle x_{k+1}-x, x-\overline{x}_{h} \rangle + \| x- \overline{x}_{h} \|^2.
\end{align*}
Then:
\begin{equation}\label{ineqq}
\begin{aligned}
 2 \beta_k \kappa \big( h(x_{k+1}) - h(\overline{x}_{h}) \big) \leq \| x-y_k \|^2-\| x_{k+1} - x \|^2-\| x_{k+1} -y_k \|^2 \\ \quad 
 + 2 \langle y_{k} - x_{k+1}, x - \overline{x}_{h} \rangle - \kappa \gamma \beta_k \| x_{k+1} - \overline{x}_{h} \|^2.
\end{aligned}
\end{equation}
By \eqref{qwc:sconvex}, we have
 \begin{equation}\label{QG}
  2 \beta_k \kappa \big( h(\overline{x}_{h}) - h(x) \big) \leq - \dfrac{\kappa^2 \gamma \beta_{k}}{2 - \kappa} \| x - \overline{x}_{h} \|^2.
 \end{equation} 
Adding \eqref{ineqq} and \eqref{QG} gives
\begin{align}\label{ineq2}
 2 \beta_k \kappa \big( & \, h(x_{k+1}) - h(x) \big)
 \le \| x-y_k \|^2-\| x_{k+1} - y_k \|^2 - \| x_{k+1} - x \|^2 \\
 &\quad +2\langle y_k - x_{k+1}, x - \overline{x}_{h} \rangle - \kappa \gamma \beta_k \, \| x_{k+1} - \overline{x}_{h} \|^2 - \dfrac{\kappa^2 \gamma \beta_{k}}{2-\kappa} \| x - \overline{x}_{h} \| ^2. \nonumber
\end{align}

Applying identity~\eqref{iden:1} with $u = x - \overline{x}_{h}$, $v = x_{k+1} - \overline{x}_{h}$, and any $\theta \in \, ]0, 1[$, we obtain
\begin{align*}
 0 \leq \| \theta (x-\overline{x}_{h}) + (1-\theta)(x_{k+1} - \overline{x}_{h}) \|^{2} = & ~ (1-\theta) \| x_{k+1} - \overline{x}_{h} \|^{2} + \theta \| x - \overline{x}_{h} \|^{2} \\
 & - \theta (1-\theta) \| x_{k+1} - x \|^{2}. 
\end{align*} 
Rearranging terms gives
\begin{equation}\label{ineq1}
 - \| x_{k+1} - \overline{x}_{h} \|^{2} \leq - \theta \| x_{k+1} - x\|^{2} + \frac{\theta}{1-\theta} \| x - \overline{x}_{h} \|^{2}.
\end{equation}
Next, for any $\varepsilon_k>0$, the following inequality holds
\begin{equation}\label{MA-MG}
 2 \langle y_k - x_{k+1},\, x - \overline{x}_{h} \rangle \le
\frac{1}{\varepsilon_k}\,\| x_{k+1} - y_k \|^2 + \varepsilon_k\,\| x - \overline{x}_{h} \|^2.
\end{equation}
Substituting \eqref{ineq1} and \eqref{MA-MG} into \eqref{ineq2}, we obtain
$$
\begin{aligned}
 2 \beta_k \kappa & ( h(x_{k+1}) - h(x)) \le \| x-y_k \|^2 - ( 1 + \beta_k \kappa\gamma \theta) \| x_{k+1}-x \|^2 \\ 
 & + \Big( \frac{1}{\varepsilon_k} - 1 \Big) \| x_{k+1} - y_k \|^2 + \Big( \varepsilon_k + \frac{\beta_k \kappa \gamma \theta}{1 - \theta} -\frac{\beta_k \kappa^2 \gamma}{2-\kappa} \Big) \| x - \overline{x}_{h} \|^2.
\end{aligned}
$$
We now choose
$$
\varepsilon_k
=\beta_k\,\gamma\!\left(
\frac{\kappa^2}{2-\kappa}
-\frac{\kappa\theta}{1-\theta}
\right)
,
$$
which cancels the coefficient of $\| x-\overline{x}_{h} \|^2$. The parameter $\theta$ can be chosen in $]0, \frac{\kappa}{2}[$ so that $\varepsilon_k > 0$. In particular, for every $\delta > 0$, the choice $\theta = \frac{\kappa}{2 + \delta}$ satisfies this condition. With this choice of parameters, we obtain
$$ 2\beta_k \kappa ( h(x_{k+1}) - h(x)) \le \| x-y_k \|^2 -\big( 1 + \beta_k \kappa \gamma \theta \big) \| x_{k+1} - x \|^2 +\Big( \frac{1}{\varepsilon_k} - 1 \Big) \| x_{k+1} - y_k \|^2. $$
Substituting $\theta = \kappa/(2+\delta)$ and the corresponding value of $\varepsilon_{k}$ into the above inequality yields \eqref{ineq3}.
\end{proof}

We next observe that the coefficient $C_k$ in~\eqref{ineq3} becomes nonpositive whenever
$$
\beta_k \ge \frac{(2-\kappa)\big(2+\delta-\kappa\big)}{\kappa^2\gamma\,\delta}.
$$
In this case, the last term in~\eqref{ineq3} can be omitted. 
For simplicity, we consider $\delta = 1$, leading to the following corollary.

\begin{corollary}\label{coro1}
Under the assumptions of Theorem \ref{lemma1}, if
\begin{equation}\label{param:cond1}
 \beta_k \ge \beta' \ge \frac{(2-\kappa)(3-\kappa)}{\,\kappa^{2}\gamma} > 0, ~ \forall ~ k \in \mathbb{N},
\end{equation}
then 
\begin{equation}\label{eq:to:conv}
 2\beta_k \kappa ( h(x_{k+1})-h(x)) \le \|x-y_k\|^2 - \Big( 1 + \tfrac{\beta_k \kappa^2 \gamma}{3} \Big) \|x_{k+1}-x\|^2.
\end{equation}
\end{corollary}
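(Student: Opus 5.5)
We obtain Corollary~\ref{coro1} by specializing Theorem~\ref{lemma1} to $\delta=1$ and then showing that the hypothesis \eqref{param:cond1} makes the last term of \eqref{ineq3} nonpositive.

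\textbf{Step 1: specialize the theorem.} Setting $\delta=1$ in \eqref{ineq3} turns the coefficient of $\|x_{k+1}-x\|^2$ into $1+\beta_k\kappa^2\gamma/3$. This is exactly the coefficient appearing in \eqref{eq:to:conv}. The remaining constant becomes
\[
 C_k=\frac{(2-\kappa)(3-\kappa)}{\beta_k\gamma\kappa^2}-1 .
\]

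\textbf{Step 2: show $C_k\le 0$.} By \eqref{param:cond1} we have $\beta_k\ge\beta'\ge(2-\kappa)(3-\kappa)/(\kappa^2\gamma)$. Since $\gamma>0$ and $0<\kappa\le 1$, both factors $2-\kappa$ and $3-\kappa$ are positive, so this lower bound is strictly positive. Hence $\beta_k>0$, and multiplying through gives
\[
 \frac{(2-\kappa)(3-\kappa)}{\beta_k\gamma\kappa^2}\le 1,
 \qquad\text{that is,}\qquad C_k\le 0 .
\]
The same positivity of $(2-\kappa)(3-\kappa)$ justifies the ``$>0$'' in \eqref{param:cond1}.

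\textbf{Step 3: drop the last term.} Because $C_k\le0$, the term $C_k\|x_{k+1}-y_k\|^2$ is nonpositive and can be removed from the right-hand side of \eqref{ineq3}. This gives \eqref{eq:to:conv} for every $x\in{\rm dom}\,h$, which is the range of $x$ covered by Theorem~\ref{lemma1}.

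If $x\notin{\rm dom}\,h$, then $h(x)=+\infty$. Meanwhile $h(x_{k+1})$ is finite, since the proximal objective is proper and its minimizer $x_{k+1}$ lies in ${\rm dom}\,h$. The left-hand side of \eqref{eq:to:conv} is therefore $-\infty$ and the inequality holds trivially.

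There is no real obstacle in this argument. The only point requiring care is the sign bookkeeping in Step~2: the monotonicity of $C_k$ in $\beta_k$ relies on $\beta_k>0$, and the positivity of the threshold relies on $\kappa\le1$.
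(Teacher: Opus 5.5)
Your proposal is correct and follows the paper's argument: set $\delta=1$ in \eqref{ineq3}, note that \eqref{param:cond1} gives $C_k\le 0$, and drop the residual term $C_k\|x_{k+1}-y_k\|^2$. Your extra case $x\notin{\rm dom}\,h$ is harmless but unnecessary, since the corollary already restricts to $x\in{\rm dom}\,h$ through Theorem~\ref{lemma1}.
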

{
\begin{remark}
Condition~\eqref{param:cond1} is sufficient only for dropping the proximal
residual in this corollary. It is not imposed in the convergence results
below, which retain the residual in Proposition~\ref{propsumm}.
\end{remark}
}

{The main consequence of the previous theorem is the following composite inequality.}

\begin{proposition} {\rm {\bf (Composite Inequality) }} \label{propsumm}
 {Assume (\ref{hyp:smooth})--(\ref{hyp:regularizer}), let $f=g+h$, and let $\{x_k\}_{k}$ be generated by Algorithm~\ref{alg:pg-sqcv}. For every $\delta>0$} and any $x \in {\rm dom}\,f$, the following ine\-qua\-li\-ty holds:
\begin{equation}\label{compositeineq}
 \begin{aligned}
  2 \kappa \beta_k \big( f(x_{k+1})-f(x) \big) & \leq (1-\kappa\beta_k\mu)\| x-x_k \|^2 - \left(1 + \frac{\beta_k \kappa^2 \gamma}{2+\delta} \right) \| x_{k+1}-x \|^2 \\
  & + D_k \| x_{k+1}-y_k \|^2 - (1 - \kappa \beta_k L) \| x_{k+1} - x_k \|^2,
\end{aligned}
\end{equation}
where $D_k :=  \frac{ (2-\kappa) \big(2 + \delta - \kappa \big)}{\beta_k \gamma \kappa^2 \delta}$.
\end{proposition}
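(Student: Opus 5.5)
The plan is to add the proximal comparison inequality \eqref{ineq3} of Theorem~\ref{lemma1}, which controls the $h$-part, to an inequality for the $g$-part obtained from the gradient step \eqref{step:grad}. Since $x\in{\rm dom}\,f={\rm dom}\,h$, Theorem~\ref{lemma1} applies with the same $\delta>0$. Writing $C_k=D_k-1$, it gives
\[
2\kappa\beta_k\bigl(h(x_{k+1})-h(x)\bigr)\le \|y_k-x\|^2-\|x_{k+1}-y_k\|^2-\Bigl(1+\tfrac{\beta_k\kappa^2\gamma}{2+\delta}\Bigr)\|x_{k+1}-x\|^2+D_k\|x_{k+1}-y_k\|^2 .
\]
I deliberately split $C_k\|x_{k+1}-y_k\|^2$ as $D_k\|x_{k+1}-y_k\|^2-\|x_{k+1}-y_k\|^2$. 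The difference $\|y_k-x\|^2-\|x_{k+1}-y_k\|^2$ is exactly the quantity that the gradient step lets us rewrite in terms of $x_k$.

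The next step is the algebraic expansion of that difference. Set $a=x_k-x$, $b=x_k-x_{k+1}$ and $d=\kappa\beta_k\nabla g(x_k)$. Then $y_k-x=a-d$, and $\|x_{k+1}-y_k\|=\|b-d\|$. Hence
\[
\|y_k-x\|^2-\|x_{k+1}-y_k\|^2=\|a\|^2-\|b\|^2-2\langle d,a-b\rangle
=\|x_k-x\|^2-\|x_{k+1}-x_k\|^2-2\kappa\beta_k\langle\nabla g(x_k),x_{k+1}-x\rangle .
\]

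Then I bound the inner product using (\ref{hyp:smooth}). Strong convexity gives
\[
g(x)\ge g(x_k)+\langle\nabla g(x_k),x-x_k\rangle+\tfrac{\mu}{2}\|x-x_k\|^2 .
\]
The descent lemma \eqref{descent:lemma} gives
\[
g(x_{k+1})\le g(x_k)+\langle\nabla g(x_k),x_{k+1}-x_k\rangle+\tfrac L2\|x_{k+1}-x_k\|^2 .
\]
Subtracting these yields
\[
-2\kappa\beta_k\langle\nabla g(x_k),x_{k+1}-x\rangle\le -2\kappa\beta_k\bigl(g(x_{k+1})-g(x)\bigr)+\kappa\beta_kL\|x_{k+1}-x_k\|^2-\kappa\beta_k\mu\|x-x_k\|^2 .
\]

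Substituting this into the $h$-inequality and moving $2\kappa\beta_k(g(x_{k+1})-g(x))$ to the left produces $2\kappa\beta_k(f(x_{k+1})-f(x))$. The coefficient of $\|x-x_k\|^2$ becomes $1-\kappa\beta_k\mu$, and that of $\|x_{k+1}-x_k\|^2$ becomes $-(1-\kappa\beta_kL)$; these are exactly the coefficients in \eqref{compositeineq}.

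The only delicate point is the bookkeeping. One must realize that the $-1$ in $C_k$ should be absorbed by the three-point expansion of the gradient step rather than kept in the residual term. This is what makes the factor $\kappa$ in \eqref{step:grad} match the factor $2\kappa\beta_k$ in Theorem~\ref{lemma1}. No step-size restriction is needed, since all coefficients are kept with their signs.
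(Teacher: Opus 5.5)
Your proof is correct and follows essentially the paper's route: the paper derives the same $g$-inequality \eqref{eq7} from strong convexity, the descent lemma and the three-point identity, and adds it to \eqref{ineq3}. The term $+\|x_{k+1}-y_k\|^2$ from the $g$-part is what turns $C_k$ into $D_k=C_k+1$. Your direct expansion of $\|y_k-x\|^2-\|x_{k+1}-y_k\|^2$ is only a more compact version of the paper's computation via \eqref{eq5} and \eqref{3:points}.
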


\begin{proof}
By the $\mu$-strong convexity of $g$, for all $x\in {\rm dom}\,h$, we have
\begin{equation}\label{eqconvex}
 g(x_k) + \langle \nabla g(x_k), x-x_{k} \rangle + \frac{\mu}{2} \|x-x_k\|^2 \leq  g(x).
\end{equation}
Applying the descent lemma \eqref{descent:lemma} yields
\begin{equation}\label{descentlemma}
 g(x_{k+1}) \le g(x_k) + \langle \nabla g(x_k), x_{k+1} - x_k \rangle + \frac{L}{2}\| x_{k+1} - x_k \|^2.
\end{equation}
Subtracting \eqref{eqconvex} from \eqref{descentlemma}, we obtain
\begin{equation}\label{eq004}
 g(x_{k+1}) - g(x) \le \langle \nabla g(x_k), x_{k+1} - x \rangle + \frac{L}{2}\| x_{k+1} - x_k\|^2 - \frac{\mu}{2}\|x-x_k\|^2. 
\end{equation}
Observe that 
\begin{equation}\label{eq5}
\begin{aligned}
    \| x-y_k\| ^2 &=  \| x-x_k\| ^2+2\langle x-x_k, x_k-y_k\rangle+\| x_k-y_k\| ^2 \\
    &= \| x-x_k\| ^2 +2\beta_k \kappa \langle x-x_k, \nabla g(x_k)\rangle +\| x_k-y_k\| ^2.
\end{aligned}
\end{equation}
{Expanding the inner product and using \eqref{eq004} gives} 
\begin{equation}\label{eq003}
 \begin{aligned}
 2\beta_k \kappa & \langle x-x_k, \nabla g(x_k) \rangle =2\beta_k \kappa \langle x-x_{k+1}, \nabla g(x_k)\rangle +2\beta_k \kappa \langle x_{k+1}-x_k, \nabla g(x_k) \rangle \\
 & \leq 2\kappa \beta_k \big(g(x)-g(x_{k+1})+ \tfrac{L}{2} \| x_{k+1}-x_{k}\| ^2 - \tfrac{ \mu}{2} \|x-x_k\|^2\big)  \\
  & ~~~ + 2\langle x_{k+1}-x_{k}, x_k-y_k \rangle.
 \end{aligned}
 \end{equation}
Using identity \eqref{3:points},
\begin{equation} 
 2 \langle x_{k+1}-x_{k}, x_k-y_k \rangle = \| x_{k+1}-y_k \|^2 - \| x_{k+1} - x_k \|^2 - \| y_k - x_k \|^2, \notag
\end{equation}
we obtain
 \begin{equation}\label{eq10}
 \begin{aligned}
  2\beta_k \kappa \langle x-x_k, \nabla g(x_k) \rangle & \leq  2\kappa \beta_k \big( g(x) - g(x_{k+1})\big) - \kappa\beta_k\mu\|x-x_k\|^2 + \| x_{k+1} - y_k \|^2 \\ 
  & - (1-\kappa \beta_k L) \| x_{k+1} - x_k \|^2 -\| y_k - x_k \|^2.
 \end{aligned}
\end{equation}
Substituting \eqref{eq10} into \eqref{eq5} yields
\begin{align}
 \| x - y_k \|^2 \le & ~ (1-\kappa\beta_k\mu)\| x_k-x \| ^2 + 2\kappa \beta_k \big( g(x) - g(x_{k+1}) \big) + \| x_{k+1} - y_k \| ^2 \notag \\
 & - (1 - \kappa \beta_k L) \| x_{k+1} - x_k \| ^2. \label{eq6}
\end{align}
Rewriting \eqref{eq6}, we get
\begin{align}
 2 \kappa \beta_k (g(x_{k+1}) - g(x)) \le & ~ (1-\kappa\beta_k\mu)\| x-x_k\| ^2 - \| x-y_k\| ^2 + \| x_{k+1} -y_k\|^2 \notag \\
 & - (1 - \kappa \beta_k L) \| x_{k+1} - x_k \|^2. \label{eq7}
\end{align}
Adding \eqref{eq7} to \eqref{ineq3} from Theorem~\ref{lemma1} completes the proof.
\end{proof}

\subsection{Convergence Results}

{Throughout this subsection, $\{x_k\}_{k}$ is generated by Algorithm \ref{alg:pg-sqcv} from $x_1 \in {\rm dom}\,h$. If the algorithm stops, its sequence is extended constantly. The general hypotheses are (\ref{hyp:smooth})--(\ref{hyp:regularizer}). In particular, quasar-convexity is global and $\mu \ge 0$ denotes the strong-convexity modulus of $g$.}

\begin{lemma} \label{lem:qg-quasar}
{Under (\ref{hyp:smooth})--(\ref{hyp:regularizer})}, for every $x\in{\rm dom}\,h$, we have
\[
h(x) \ge h(\overline{x}_{h}) + \frac{\kappa\gamma}{2(2-\kappa)}\|x-\overline{x}_{h}\|^2.
\]
Consequently, $F_\kappa:=\kappa g+h$ {and $f=g+h$ are} coercive, and in particular {have} bounded sublevel sets {and attain their minima}.
\end{lemma}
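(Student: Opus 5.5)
The quadratic-growth inequality is exactly \eqref{qwc:sconvex} with $K=\mathbb R^n$. I will rederive it directly from Definition~\ref{def:quasar} so that it holds for every $x\in{\rm dom}\,h$. Fix such an $x$ and $\lambda\in[0,1[$. Since $\overline x_h$ is a global minimizer, $h(\overline x_h)\leq h(\lambda\overline x_h+(1-\lambda)x)$. Combining this with the quasar-convexity inequality gives
\[
 \kappa(1-\lambda)\bigl(h(x)-h(\overline x_h)\bigr)\geq \lambda\Bigl(1-\frac{\lambda}{2-\kappa}\Bigr)\frac{\kappa\gamma}{2}\|x-\overline x_h\|^2 .
\]
Here I use that $h(\overline x_h)$ and $h(x)$ are finite, so the subtraction is legitimate. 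The left side is $(1-\kappa\lambda)h(x)+\kappa\lambda h(\overline x_h)-h(\overline x_h)$ rearranged. Dividing by $\kappa$ and letting $\lambda\uparrow1$ does not work directly, since the left side tends to $0$. So instead I keep the factor and divide by $(1-\lambda)$ before taking the limit. This is the one delicate point: the bookkeeping of the $\lambda\to1$ limit. The paper states the outcome with constant $\kappa\gamma/(2(2-\kappa))$, and I will track the algebra carefully to match it. If the crude limit degenerates, I will simply invoke \eqref{qwc:sconvex} as already established in Section~\ref{sec:02} under the global hypothesis (\ref{hyp:regularizer}). For points outside ${\rm dom}\,h$ the inequality is trivial.

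For coercivity I use (\ref{hyp:smooth}). Convexity of $g$ gives the affine minorant $g(x)\geq g(0)+\langle\nabla g(0),x\rangle\geq g(0)-\|\nabla g(0)\|\,\|x\|$. Adding $\kappa$ times this to the growth bound yields
\[
 F_\kappa(x)\geq h(\overline x_h)+\kappa g(0)-\kappa\|\nabla g(0)\|\,\|x\|+\frac{\kappa\gamma}{2(2-\kappa)}\|x-\overline x_h\|^2 ,
\]
valid on ${\rm dom}\,h$ and trivially elsewhere. The right-hand side is a quadratic in $\|x\|$ with positive leading coefficient, because $\gamma>0$. Hence it tends to $+\infty$, and the same argument with weight $1$ handles $f$. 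Therefore every sublevel set $S_\lambda(F_\kappa)$ and $S_\lambda(f)$ is bounded.

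For attainment, both objectives are proper, since ${\rm dom}\,h\neq\emptyset$ and $g$ is finite. They are also lsc, as the sum of an lsc function and a continuous one. Take a nonempty sublevel set at a level $\lambda\geq F_\kappa(x_0)$ for some $x_0\in{\rm dom}\,h$. It is closed by lower semicontinuity and bounded by coercivity, hence compact. Weierstrass then gives a minimizer, and likewise for $f$.
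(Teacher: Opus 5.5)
Your coercivity and attainment arguments are correct and match the paper's proof. The paper takes the affine minorant of $g$ at $\overline{x}_h$ rather than at $0$, which changes nothing. Strictly, your right-hand side is not a quadratic in $\|x\|$, but it is bounded below by one once you use $\|x-\overline{x}_h\|^2\ge\|x\|^2-2\|\overline{x}_h\|\,\|x\|$. For the growth inequality the paper simply cites \eqref{qwc:sconvex}, which is your fallback.

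The direct rederivation you sketch, however, contains an algebra slip, and the ``delicate point'' you anticipate comes from that slip. Combining $h(\overline{x}_h)\le h(\lambda\overline{x}_h+(1-\lambda)x)$ with Definition~\ref{def:quasar} gives
\[
 (1-\kappa\lambda)\bigl(h(x)-h(\overline{x}_h)\bigr)\ge\lambda\Bigl(1-\frac{\lambda}{2-\kappa}\Bigr)\frac{\kappa\gamma}{2}\|x-\overline{x}_h\|^2,
\]
because $(1-\kappa\lambda)h(x)+\kappa\lambda h(\overline{x}_h)-h(\overline{x}_h)=(1-\kappa\lambda)\bigl(h(x)-h(\overline{x}_h)\bigr)$. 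The factor is $1-\kappa\lambda$, not $\kappa(1-\lambda)$; the two coincide only for $\kappa=1$.

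For $\kappa<1$ your inequality is false in general. Already for $h=\frac{\gamma}{2}\|\cdot\|^2$, which satisfies (\ref{hyp:regularizer}) for every $\kappa\in\,]0,1]$, it fails as $\lambda\uparrow1$. Dividing it by $1-\lambda$, as you propose, would send the right-hand side to $+\infty$ for every $x\ne\overline{x}_h$. So no amount of careful bookkeeping along that route yields the stated constant.

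The correct step is to divide by $1-\kappa\lambda>0$ for $\lambda\in[0,1[$ and let $\lambda\uparrow1$. The ratio $\lambda\bigl(1-\lambda/(2-\kappa)\bigr)/(1-\kappa\lambda)$ tends to $1/(2-\kappa)$ for every $\kappa\in\,]0,1]$ (for $\kappa=1$ it equals $\lambda$), which gives exactly the modulus $\kappa\gamma/(2(2-\kappa))$. In particular, for $\kappa<1$ the direct limit already works once the factor is correct. Either make this correction or drop the rederivation and rely on \eqref{qwc:sconvex}; as written, only your fallback supports the first claim.
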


\begin{proof}
The first claim is \eqref{qwc:sconvex}. Convexity gives $g(x)\ge g(\overline{x}_{h})+\langle\nabla g(\overline{x}_{h}),x-\overline{x}_{h}\rangle$. The quadratic lower bound for $h$ dominates this affine term as $\|x\|\to+\infty$, proving coercivity of both $F_\kappa$ and $f$. Properness and lower semicontinuity then imply that both objectives attain their minima.
\end{proof}

\begin{lemma} \label{lem:kappa-decrease}
 {Assume (\ref{hyp:smooth})--(\ref{hyp:regularizer}).} For any $\kappa \in \, ]0,1]$ and $0 < \beta_k \le 1/(\kappa L)$, we have
\[
 F_\kappa (x_{k+1}) \le F_\kappa (x_k) - \Big( \frac1{2 \beta_k}-\frac{\kappa L}2 \Big) \|x_{k+1}-x_k\|^2, \qquad F_\kappa := \kappa g + h.
\]
\end{lemma}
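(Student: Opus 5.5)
The plan is the standard sufficient-decrease argument for proximal gradient, applied to $F_\kappa=\kappa g+h$. Only two ingredients are needed: the descent lemma \eqref{descent:lemma} for $\kappa g$, whose gradient is $\kappa L$-Lipschitz, and the global optimality of the proximal selection, i.e., inequality \eqref{opti}. Neither convexity of $g$ nor quasar-convexity of $h$ is needed for this step.

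\emph{Proximal step.} I will take $z=x_k$ in \eqref{opti}. This is allowed because $x_k\in{\rm dom}\,h$, which holds by induction: $x_1\in{\rm dom}\,h$, and each proximal output has finite value since $h$ is proper. Expanding both squared norms with $y_k=x_k-\kappa\beta_k\nabla g(x_k)$ gives $\|x_k-y_k\|^2=\kappa^2\beta_k^2\|\nabla g(x_k)\|^2$ and
\[
\|x_{k+1}-y_k\|^2=\|x_{k+1}-x_k\|^2+2\kappa\beta_k\langle\nabla g(x_k),x_{k+1}-x_k\rangle+\kappa^2\beta_k^2\|\nabla g(x_k)\|^2 .
\]
After cancelling the common gradient-norm term and dividing appropriately, this yields
\[
h(x_{k+1})+\kappa\langle\nabla g(x_k),x_{k+1}-x_k\rangle+\frac{1}{2\beta_k}\|x_{k+1}-x_k\|^2\le h(x_k).
\]

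\emph{Gradient step.} I will multiply the descent lemma inequality \eqref{descentlemma} by $\kappa>0$:
\[
\kappa g(x_{k+1})\le\kappa g(x_k)+\kappa\langle\nabla g(x_k),x_{k+1}-x_k\rangle+\frac{\kappa L}{2}\|x_{k+1}-x_k\|^2 .
\]
Adding this to the proximal inequality cancels the inner-product terms and gives exactly
\[
F_\kappa(x_{k+1})\le F_\kappa(x_k)-\Big(\frac{1}{2\beta_k}-\frac{\kappa L}{2}\Big)\|x_{k+1}-x_k\|^2 .
\]
The condition $\beta_k\le1/(\kappa L)$ makes the coefficient nonnegative, so this is a genuine decrease. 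If the algorithm has stopped and the sequence is extended constantly, the inequality holds trivially.

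There is no real obstacle. The only points needing care are justifying $x_k\in{\rm dom}\,h$, so that $z=x_k$ is admissible in \eqref{opti} and the subtraction of finite values is legitimate, and keeping track of the factor $\kappa$ in $y_k$. That factor is exactly what makes the inner products cancel when the descent lemma is applied to $\kappa g$ rather than to $g$.
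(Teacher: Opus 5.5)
Your proof is correct and matches the paper's argument: proximal optimality with comparison point $z=x_k$, cancellation of the common $\frac{\kappa^2\beta_k}{2}\|\nabla g(x_k)\|^2$ terms, and addition of $\kappa$ times the descent lemma. Your inductive check that $x_k\in{\rm dom}\,h$ is a small point the paper leaves implicit.
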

\begin{proof}
By proximal optimality of $x_{k+1}$ with comparison point $z=x_k$, we have 
$$h(x_{k+1}) + \frac1{2 \beta_k} \|x_{k+1} - y_k\|^2 \le h(x_k) + \frac1{2 \beta_k} \|x_k - y_k\|^2.$$ 
Using $y_k=x_k-\kappa\beta_k\nabla g(x_k)$ to expand both squared norms, we cancel the common terms $\frac{\kappa^2\beta_k}{2}\|\nabla g(x_k)\|^2$ and obtain
\[
 h(x_{k+1}) \le h(x_k) - \kappa \langle \nabla g(x_k), x_{k+1}-x_k\rangle - \tfrac{1}{2 \beta_k}  \|x_{k+1}-x_k\|^2.
\]
Adding $\kappa$ times the descent lemma \eqref{descent:lemma}, namely $\kappa g(x_{k+1}) \le \kappa g(x_k)+ \kappa\langle\nabla g(x_k), x_{k+1}-x_k \rangle + \frac{\kappa L}2\|x_{k+1}-x_k\|^2$, cancels the inner-product terms and proves the claim.
\end{proof}

\begin{lemma} \label{lem:bounded}
 {Assume (\ref{hyp:smooth})--(\ref{hyp:regularizer}).} Let $0<\beta_k \equiv \beta < \dfrac{1}{\kappa L}$. Then, by Lemma \ref{lem:kappa-decrease}, $\{F_\kappa (x_k)\}_{k}$ is non\-in\-crea\-sing, and
\begin{enumerate}
 \item[$(a)$] $\{x_k\}_{k}$ is bounded, and consequently $M := \sup_{k \in \mathbb N} \|\nabla g(x_k)\| < + \infty$;
 
 \item[$(b)$] $\displaystyle \sum_{k=1}^{\infty} \|x_{k+1}- x_k\|^2 < + \infty$. In particular, $\|x_{k+1}- x_k\|\to0$.
\end{enumerate}
\end{lemma}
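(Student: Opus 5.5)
The plan rests on Lemma~\ref{lem:kappa-decrease} combined with the coercivity in Lemma~\ref{lem:qg-quasar}. Since $\beta<1/(\kappa L)$, the coefficient $c:=\frac{1}{2\beta}-\frac{\kappa L}{2}$ is strictly positive. Lemma~\ref{lem:kappa-decrease} then gives, for every $k$,
\[
F_\kappa(x_{k+1})+c\|x_{k+1}-x_k\|^2\le F_\kappa(x_k).
\]
So $\{F_\kappa(x_k)\}_k$ is nonincreasing. This also covers the case in which the algorithm stops, because the sequence is then extended constantly.

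For $(a)$, note that $x_1\in{\rm dom}\,(g+h)$, so $F_\kappa(x_1)<+\infty$. Monotonicity places every iterate in the sublevel set $S_{F_\kappa(x_1)}(F_\kappa)$. By Lemma~\ref{lem:qg-quasar}, $F_\kappa$ is coercive, so this set is bounded, and hence $\{x_k\}_k\subset\overline{\mathbb B}(0,R)$ for some $R$. Since $\nabla g$ is $L$-Lipschitz, we have $\|\nabla g(x_k)\|\le\|\nabla g(0)\|+LR$, so $M<+\infty$.

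For $(b)$, I will telescope the decrease inequality from $k=1$ to $N$:
\[
c\sum_{k=1}^{N}\|x_{k+1}-x_k\|^2\le F_\kappa(x_1)-F_\kappa(x_{N+1})\le F_\kappa(x_1)-\min F_\kappa .
\]
The minimum $\min F_\kappa$ is finite because $F_\kappa$ attains its minimum (Lemma~\ref{lem:qg-quasar}) and $F_\kappa$ is proper. Letting $N\to\infty$ shows the series converges, and therefore $\|x_{k+1}-x_k\|\to0$.

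The only delicate point is the lower bound on $F_\kappa$. Properness of $h$ alone does not give one, so it has to come from the attained minimum in Lemma~\ref{lem:qg-quasar}, or, more concretely, from quadratic growth of $h$ combined with an affine minorant of $g$. Once that is in place, the rest is routine.
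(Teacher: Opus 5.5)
Your proposal is correct and follows essentially the same route as the paper: sufficient decrease from Lemma~\ref{lem:kappa-decrease}, boundedness of the iterates from coercivity of $F_\kappa$ (Lemma~\ref{lem:qg-quasar}), and telescoping against the finite minimum of $F_\kappa$. The only cosmetic difference is that you bound $M$ explicitly through the Lipschitz estimate $\|\nabla g(x_k)\|\le\|\nabla g(0)\|+LR$, whereas the paper simply invokes continuity of $\nabla g$ on a bounded set.
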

\begin{proof}
 By Lemma \ref{lem:kappa-decrease}, $F_\kappa (x_{k+1}) \le F_\kappa (x_k) - \big( \tfrac1{2 \beta} -\tfrac{\kappa L}2 \big) \|x_{k+1} - x_k\|^2$, so $\{F_\kappa (x_k)\}_{k}$ is nonincreasing and $F_\kappa (x_k) \le F_\kappa(x_1)$ for every $k$.

 $(a)$: By Lemma~\ref{lem:qg-quasar}, $F_\kappa$ is coercive, so the sublevel set $\{x: \, F_\kappa(x) \le F_\kappa(x_1)\}$ is bounded. Since $\{x_k\}_{k}$ lies in this set, it is bounded, and by continuity of $\nabla g$, we have $M := \sup_k \|\nabla g(x_k)\| < + \infty$.

 $(b)$: Since $\beta < \frac{1}{(\kappa L)}$, $c_0 :=  \tfrac1{2 \beta} - \tfrac{\kappa L}2 > 0$. Summing the inequality of Lemma \ref{lem:kappa-decrease} from $k=1$ to $K$ gives
\[
 c_0 \sum_{k=1}^{K}\|x_{k+1}-x_k\|^2 \;\le\; F_\kappa (x_1) - F_\kappa (x_{K+1}) \;\le\; F_\kappa(x_1)-\inf_{\R^n} F_\kappa,
\]
and the right-hand side is finite since $F_\kappa$ is {proper, lsc and} coercive. Letting $K \to +\infty$ gives $\sum_k \|x_{k+1}-x_k \|^2 < +\infty$, hence $\|x_{k+1}-x_k\| \to 0$.
\end{proof}

We now show that every accumulation point of the sequence generated by Algorithm \ref{alg:pg-sqcv} is a $\kappa$-critical point of $f = g+h$ in the sense of \eqref{crit:point}, i.e., a critical point of $F_\kappa = \kappa g + h$.

\begin{lemma}[Subgradient bound] \label{lem:subgrad-bound}
 {Assume (\ref{hyp:smooth})--(\ref{hyp:regularizer}).} Let $0<\beta_k \equiv \beta < 1/(\kappa L)$. For every $k \in \mathbb{N}$, define
 \[
  w_{k+1} := \kappa \big( \nabla g(x_{k+1}) - \nabla g(x_k) \big) + \frac{x_k - x_{k+1}}{\beta}.
 \]
 Then $w_{k+1} \in \kappa \nabla g(x_{k+1}) + \partial^{L} h(x_{k+1})$ and
 \[
  \| w_{k+1} \| \;\le\; \Big( \kappa L + \frac{1}{\beta} \Big) \| x_{k+1} - x_k \|, ~ \forall ~ k \in \mathbb{N}.
 \]
 In particular, by Lemma \ref{lem:bounded}$(b)$, $w_{k+1} \to 0$ as $k \to +\infty$.
\end{lemma}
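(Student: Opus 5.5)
The plan is to derive the inclusion from the first-order optimality condition of the proximal subproblem, exactly as in Remark~\ref{rem:stop}, and then bound the norm with the Lipschitz continuity of $\nabla g$.

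\emph{Optimality condition.} Since $x_{k+1}\in{\rm Prox}_{\beta h}(y_k)$, the point $x_{k+1}$ is a global minimizer of $\phi(z):=h(z)+\frac{1}{2\beta}\|z-y_k\|^2$. Fermat's rule for the limiting subdifferential gives $0\in\partial^L\phi(x_{k+1})$, and $x_{k+1}\in{\rm dom}\,h$. The quadratic term is $C^1$, so the sum rule recalled in Section~\ref{sec:02} yields
\[
 \frac{y_k-x_{k+1}}{\beta}\in\partial^L h(x_{k+1}).
\]
Substituting $y_k=x_k-\kappa\beta\nabla g(x_k)$, this becomes
\[
 \frac{x_k-x_{k+1}}{\beta}-\kappa\nabla g(x_k)\in\partial^L h(x_{k+1}).
\]
Adding $\kappa\nabla g(x_{k+1})$ to both sides shows that $w_{k+1}\in\kappa\nabla g(x_{k+1})+\partial^L h(x_{k+1})$. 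If the algorithm has stopped and the sequence is extended constantly, then $x_{k+1}=x_k$ and $w_{k+1}=0$, and the inclusion is exactly Remark~\ref{rem:stop}.

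\emph{Norm bound.} By the triangle inequality and the $L$-Lipschitz continuity of $\nabla g$ from (\ref{hyp:smooth}),
\[
 \|w_{k+1}\|\le \kappa L\|x_{k+1}-x_k\|+\frac{1}{\beta}\|x_{k+1}-x_k\|.
\]

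\emph{Convergence.} Lemma~\ref{lem:bounded}$(b)$ gives $\|x_{k+1}-x_k\|\to0$, so $w_{k+1}\to0$.

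The only delicate point is the justification of Fermat's rule and of the sum rule for a merely lsc $h$. Both are standard for the limiting subdifferential: a global minimizer has $0$ in the Fréchet subdifferential, which is contained in the limiting one, and adding a $C^1$ function shifts the subdifferential by its gradient. I would cite \cite{RW} for these facts. The remaining steps are routine.
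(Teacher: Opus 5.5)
Your proposal is correct and follows essentially the same route as the paper: Fermat's rule with the $C^1$ sum rule for the proximal subproblem gives $(y_k-x_{k+1})/\beta\in\partial^L h(x_{k+1})$, then you substitute $y_k$, add $\kappa\nabla g(x_{k+1})$, bound the norm by the triangle inequality and $L$-Lipschitz continuity of $\nabla g$, and conclude with Lemma~\ref{lem:bounded}$(b)$. Your remarks on justifying Fermat's rule through the Fr\'echet subdifferential and on the constantly extended tail are harmless details that the paper leaves implicit.
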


\begin{proof}
 By proximal optimality of {$x_{k+1} \in {\rm Prox}_{\beta h}(y_k)$} and the sum rule for $\partial^{L} h$, we have
 \[
  0 \in \partial^{L} h(x_{k+1}) + \frac{1}{\beta}(x_{k+1} - y_k) ~ \Longleftrightarrow ~ \frac{y_k - x_{k+1}}{\beta} \in \partial^{L} h(x_{k+1}).
 \]
 Since $y_k = x_k - \kappa \beta \nabla g(x_k)$,
 \[
  \frac{y_k - x_{k+1}}{\beta} = \frac{x_k - x_{k+1}}{\beta} - \kappa \nabla g(x_k) \in \partial^{L} h(x_{k+1}).
 \]
 Adding $\kappa \nabla g(x_{k+1})$ to both sides gives
 \[
  w_{k+1} = \kappa \big( \nabla g(x_{k+1}) - \nabla g(x_k) \big) + \frac{x_k - x_{k+1}}{\beta} \in \kappa \nabla g(x_{k+1}) + \partial^{L} h(x_{k+1}).
 \]
 By the triangle inequality and the $L$-Lipschitz continuity \eqref{L:smooth} of $\nabla g$, we obtain
 \[
  \| w_{k+1} \| \le \kappa \| \nabla g(x_{k+1}) - \nabla g(x_k) \| + \frac{1}{\beta} \| x_{k+1} - x_k \| \le \Big( \kappa L + \frac{1}{\beta} \Big) \| x_{k+1} - x_k \|,
 \]
 which is the stated bound. The convergence $w_{k+1}\to0$ then follows from $\|x_{k+1}-x_k\|\to0$ (Lemma \ref{lem:bounded}$(b)$).
\end{proof}

\begin{theorem}[Subsequential convergence to critical points] \label{thm:critical}
 {Assume (\ref{hyp:smooth})--(\ref{hyp:regularizer}).} Let $0<\beta_k \equiv \beta < 1/(\kappa L)$. Then $\{x_k\}_k$ admits at least one accumulation point, and every accumulation point $\widehat{x}$ of $\{x_k\}_k$ is a $\kappa$-critical point of $f=g+h$, i.e.,
 \[
  0 \in \kappa \nabla g(\widehat{x}) + \partial^{L} h(\widehat{x}).
 \]
\end{theorem}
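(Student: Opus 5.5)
Existence of an accumulation point comes directly from Lemma~\ref{lem:bounded}$(a)$: $\{x_k\}_k$ is bounded, so Bolzano--Weierstrass gives a subsequence $x_{k_j}\to\widehat x$. For criticality, we pass to the limit in the inclusion of Lemma~\ref{lem:subgrad-bound}. Along the shifted subsequence, $w_{k_j}\in\kappa\nabla g(x_{k_j})+\partial^L h(x_{k_j})$ with $w_{k_j}\to0$. Since $\|x_{k+1}-x_k\|\to0$ by Lemma~\ref{lem:bounded}$(b)$, we may work with $x_{k_j}$ or $x_{k_j-1}$ interchangeably; both converge to $\widehat x$. By continuity of $\nabla g$, the vectors $v_j:=w_{k_j}-\kappa\nabla g(x_{k_j})\in\partial^L h(x_{k_j})$ converge to $-\kappa\nabla g(\widehat x)$.

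The closedness property of $\partial^L h$ stated in Section~\ref{sec:02} requires $h(x_{k_j})\to h(\widehat x)$, and establishing this is the main obstacle, since $h$ is only lsc. Lower semicontinuity gives $h(\widehat x)\le\liminf_j h(x_{k_j})$, so only the reverse inequality for the $\limsup$ is needed. We use proximal optimality \eqref{opti} at step $k_j-1$ with comparison point $z=\widehat x$, which lies in ${\rm dom}\,h$ because $h(\widehat x)\le\liminf h(x_{k_j})\le F_\kappa(x_1)-\kappa\inf g$, a finite bound. This gives
\[
 h(x_{k_j})\le h(\widehat x)+\frac{1}{2\beta}\bigl(\|\widehat x-y_{k_j-1}\|^2-\|x_{k_j}-y_{k_j-1}\|^2\bigr).
\]
Since $y_{k_j-1}=x_{k_j-1}-\kappa\beta\nabla g(x_{k_j-1})$ and $x_{k_j-1}\to\widehat x$, we get $y_{k_j-1}\to\widehat x-\kappa\beta\nabla g(\widehat x)$. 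Both squared norms then converge to $\kappa^2\beta^2\|\nabla g(\widehat x)\|^2$, so their difference tends to $0$. Hence $\limsup_j h(x_{k_j})\le h(\widehat x)$, and therefore $h(x_{k_j})\to h(\widehat x)$.

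With $x_{k_j}\to\widehat x$, $h(x_{k_j})\to h(\widehat x)$, $v_j\in\partial^L h(x_{k_j})$ and $v_j\to-\kappa\nabla g(\widehat x)$, closedness yields $-\kappa\nabla g(\widehat x)\in\partial^L h(\widehat x)$. This is exactly \eqref{crit:point}. If the algorithm stops, the constant extension makes $\widehat x$ the returned point, and Remark~\ref{rem:stop} already covers that case.
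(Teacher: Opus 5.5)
Your proof is correct and follows the paper's argument essentially verbatim. The only difference is a shift of the index by one: the paper works with $x_{k_j+1}$ and the proximal step at $k_j$, while you work with $x_{k_j}$ and the step at $k_j-1$. The key value-convergence step, proximal optimality against the comparison point $\widehat x$ combined with lower semicontinuity, is identical.

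One small fix is needed. Under (H1) with $\mu=0$, $g$ may be affine, so $\inf g=-\infty$ and your bound $F_\kappa(x_1)-\kappa\inf g$ need not be finite. Justify $h(\widehat x)<+\infty$ instead via $h(x_{k_j})\le F_\kappa(x_1)-\kappa g(x_{k_j})$ together with $g(x_{k_j})\to g(\widehat x)$, or via lower semicontinuity of $F_\kappa$ as the paper does.
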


\begin{proof}
 By Lemma \ref{lem:bounded}$(a)$, $\{x_k\}_k$ is bounded, so it admits a convergent subsequence $x_{k_j} \rightarrow \widehat{x}$. By Lemma \ref{lem:bounded}$(b)$, $x_{k_j+1}-x_{k_j}\to0$, hence $x_{k_j+1}\rightarrow \widehat{x}$ as well.
 {First we verify convergence of the function values, which is required for the limiting-subdifferential argument. By Lemma~\ref{lem:kappa-decrease}, the sequence $\{F_\kappa(x_k)\}_{k}$ is nonincreasing. By Lemma~\ref{lem:qg-quasar}, $F_\kappa$ has a finite minimum, so this sequence is bounded below. Therefore, $F_\kappa(x_k)\to\ell:=\inf_{k\geq1}F_\kappa(x_k)\in\R$. Lower semicontinuity gives $F_\kappa(\widehat{x})\le\ell$, so $h(\widehat{x})< + \infty$. Proximal optimality with comparison point $\widehat{x}$ gives}
 {
\[
 h(x_{k_j+1})+\frac{1}{2\beta}\|x_{k_j+1}-y_{k_j}\|^2
 \le h(\widehat{x})+\frac{1}{2\beta}\|\widehat{x}-y_{k_j}\|^2.
 \]
}
 {Both squared norms converge to $\kappa^2\beta^2\|\nabla g(\widehat{x})\|^2$. Together with lower semicontinuity, this proves $h(x_{k_j+1})\to h(\widehat{x})$, and hence $F_\kappa(\widehat{x})=\ell$.}
 By Lemma \ref{lem:subgrad-bound}, $w_{k_j+1} \in \kappa \nabla g(x_{k_j+1}) + \partial^{L} h(x_{k_j+1})$ with $w_{k_j+1}\rightarrow 0$. Since $\kappa\nabla g(\cdot)$ is continuous and {the limiting subdifferential} is closed {along value-convergent sequences}, letting $j \rightarrow +\infty$ yields
 \[
  0 = \lim_{j\to+\infty} w_{k_j+1} \;\in\; \kappa \nabla g(\widehat{x}) + \partial^{L} h(\widehat{x}),
 \]
 which is \eqref{crit:point}.
\end{proof}

\begin{remark}
 Theorem \ref{thm:critical} {uses $\gamma>0$ from (\ref{hyp:regularizer})} (needed only to guarantee boundedness of $\{x_k\}_k$ via Lemma \ref{lem:bounded}$(a)$); it requires no additional condition on $\mu$. When $\kappa=1$, \eqref{crit:point} is precisely stationarity of $f=g+h$ itself. For $\kappa<1$, however, a $\kappa$-critical point of $f$ need not be a critical point of $f$, since the {iteration decreases $F_\kappa=\kappa g+h$}. {This distinction is illustrated by the nonconvex regularizer in Example~\ref{ex:weighting-bias}.}
\end{remark}

{
\begin{corollary}[Convergence under the KL property] \label{cor:kl}
Under the hypotheses of Theorem~\ref{thm:critical}, if $F_\kappa$ has the Kurdyka--\L{}ojasiewicz (KL) property at every accumulation point, then $\sum_k\|x_{k+1}-x_k\|<\infty$ and the whole sequence converges to a critical point of $F_\kappa$.
\end{corollary}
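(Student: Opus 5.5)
We follow the standard Attouch--Bolte--Svaiter template, applied to $\sigma:=F_\kappa$, and feed it the three ingredients already established in this section. The first is \emph{sufficient decrease}: by Lemma~\ref{lem:kappa-decrease} with constant step $\beta<1/(\kappa L)$,
\[
F_\kappa(x_{k+1})+c_0\|x_{k+1}-x_k\|^2\le F_\kappa(x_k),
\qquad c_0=\tfrac{1}{2\beta}-\tfrac{\kappa L}{2}>0 .
\]
The second is the \emph{relative error bound}: by Lemma~\ref{lem:subgrad-bound},
\[
\operatorname{dist}\bigl(0,\partial^L F_\kappa(x_{k+1})\bigr)\le\|w_{k+1}\|\le b\|x_{k+1}-x_k\|,
\qquad b=\kappa L+1/\beta ,
\]
where we use the sum rule so that $\partial^L F_\kappa=\kappa\nabla g+\partial^L h$. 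The third is \emph{continuity on the limit set}. Let $\omega$ denote the set of accumulation points. From the proof of Theorem~\ref{thm:critical}, $F_\kappa(x_k)\downarrow\ell$ and $F_\kappa\equiv\ell$ on $\omega$. Moreover, $\omega$ is nonempty and compact, and $\operatorname{dist}(x_k,\omega)\to0$; this follows from boundedness (Lemma~\ref{lem:bounded}(a)) and from $\|x_{k+1}-x_k\|\to0$, which also makes $\omega$ connected.

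First we dispose of the trivial cases. If $F_\kappa(x_{K})=\ell$ for some $K$, decrease forces $x_{k+1}=x_k$ for all $k\ge K$, so the sum is finite. Otherwise $F_\kappa(x_k)>\ell$ for every $k$.

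Next we build a \emph{uniformized KL inequality} on $\omega$. Each $\bar u\in\omega$ is critical by Theorem~\ref{thm:critical}, so $\bar u\in{\rm dom}\,\partial^L F_\kappa$ and the KL property holds there with some $(U_{\bar u},\eta_{\bar u},\varphi_{\bar u})$. By compactness, finitely many neighborhoods cover $\omega$. Because $F_\kappa$ is constant on $\omega$, we can take $\eta=\min\eta_i$ and $\varphi=\sum\varphi_i$, which is still concave with positive derivative and dominates each $\varphi_i'$. This gives $\varepsilon>0$ such that
\[
\varphi'\bigl(F_\kappa(u)-\ell\bigr)\operatorname{dist}\bigl(0,\partial^L F_\kappa(u)\bigr)\ge1
\]
for every $u$ with $\operatorname{dist}(u,\omega)<\varepsilon$ and $\ell<F_\kappa(u)<\ell+\eta$. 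For all $k\ge k_0$ both conditions hold at $x_k$.

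The key step is then, for $k\ge k_0$, to set $\Delta_k=\varphi(F_\kappa(x_k)-\ell)-\varphi(F_\kappa(x_{k+1})-\ell)$. Concavity gives
\[
\Delta_k\ge\varphi'\bigl(F_\kappa(x_k)-\ell\bigr)\bigl(F_\kappa(x_k)-F_\kappa(x_{k+1})\bigr)
\ge\frac{c_0\|x_{k+1}-x_k\|^2}{b\|x_k-x_{k-1}\|},
\]
where the second inequality uses the KL inequality at $x_k$ together with the error bound $\operatorname{dist}(0,\partial^L F_\kappa(x_k))\le b\|x_k-x_{k-1}\|$. Hence
\[
\|x_{k+1}-x_k\|\le\sqrt{(b/c_0)\Delta_k\|x_k-x_{k-1}\|}.
\]
Applying $2\sqrt{ab}\le a+b$ yields
\[
2\|x_{k+1}-x_k\|\le\|x_k-x_{k-1}\|+(b/c_0)\Delta_k .
\]
Summing telescopes the $\Delta_k$ and bounds $\sum_{k\ge k_0}\|x_{k+1}-x_k\|$ by
\[
\|x_{k_0}-x_{k_0-1}\|+(b/c_0)\,\varphi\bigl(F_\kappa(x_{k_0})-\ell\bigr)<\infty .
\]
Finite length makes $\{x_k\}$ Cauchy, so it converges to some $\bar x$, and $\bar x$ is critical for $F_\kappa$ by Theorem~\ref{thm:critical}.

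The main obstacle is the uniformization step. The KL hypothesis is only pointwise, and the argument needs a single $\varphi$ and $\eta$ valid on a neighborhood of the whole limit set. This is where we rely on compactness of $\omega$ and on the constancy of $F_\kappa$ on $\omega$, the latter being exactly the value convergence proved inside Theorem~\ref{thm:critical}. We also need care that the KL region is entered simultaneously in distance and in value for all large $k$, which follows from $\operatorname{dist}(x_k,\omega)\to0$ and $F_\kappa(x_k)\downarrow\ell$.
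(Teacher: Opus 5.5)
Your proof is correct. It uses the same three ingredients the paper verifies: sufficient decrease (Lemma~\ref{lem:kappa-decrease}), the relative error bound (Lemma~\ref{lem:subgrad-bound}), and value convergence along accumulation points (from the proof of Theorem~\ref{thm:critical}). The difference is in how you conclude. The paper feeds these ingredients directly into \cite[Theorem~2.9]{ABS}, whereas you re-derive the conclusion through a uniformized KL inequality on the limit set $\omega$.

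The cited theorem needs the KL property at a \emph{single} cluster point $\tilde x$, namely the one supplied by the continuity condition. It keeps the iterates inside that one KL neighborhood by a trapping induction: once an iterate is close enough to $\tilde x$ with value close to $\ell$, the finite-length estimate itself prevents later iterates from leaving. Your route, in the style of the uniformized KL lemma of Bolte, Sabach and Teboulle, instead relies on three facts:
\begin{itemize}
\item the whole sequence is bounded, by coercivity of $F_\kappa$;
\item $\omega$ is compact and $\operatorname{dist}(x_k,\omega)\to0$;
\item $F_\kappa\equiv\ell$ on $\omega$.
\end{itemize}
Together these make the KL region entered automatically for all large $k$, so no induction is needed.

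You gain a self-contained and more transparent argument. The cost is that you genuinely use the KL property at every point of $\omega$, which is exactly what the corollary assumes. The paper's citation would already give the conclusion from the KL property at one accumulation point, so the stated hypothesis is stronger than the paper's own proof requires.

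One small step you leave implicit: dividing by $\|x_k-x_{k-1}\|$ is legitimate in the nontrivial case. There the KL inequality forces $\operatorname{dist}(0,\partial^L F_\kappa(x_k))>0$, and the error bound then gives $\|x_k-x_{k-1}\|>0$.
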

\begin{proof}
Lemma~\ref{lem:kappa-decrease}, Lemma~\ref{lem:subgrad-bound}, and the value-convergent subsequence constructed in Theorem~\ref{thm:critical} verify the sufficient-decrease, relative-error and continuity conditions of the standard KL convergence theorem \cite[Theorem~2.9]{ABS}.
\end{proof}
}

\begin{lemma}[Radial subgradient inequality]\label{lem:radial-subgradient}
Under (\ref{hyp:regularizer}), for every $u\in{\rm dom}\,h$ and $v\in\partial^Lh(u)$, we have
\begin{equation}\label{eq:radial-subgradient}
 \langle v,u-\overline{x}_h\rangle\ge\kappa\big(h(u)-h(\overline{x}_h)\big)+\frac{\kappa\gamma}{2}\|u-\overline{x}_h\|^2
 \ge\frac{\kappa\gamma}{2-\kappa}\|u-\overline{x}_h\|^2.
\end{equation}
\end{lemma}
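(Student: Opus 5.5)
The plan is to prove the first inequality in \eqref{eq:radial-subgradient} for Fr\'echet subgradients, pass to limiting subgradients using the value convergence built into the definition of $\partial^L h$, and then get the second inequality from quadratic growth \eqref{qwc:sconvex}.

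\emph{Fr\'echet step.} Fix $u\in{\rm dom}\,h$ and $v\in\widehat\partial h(u)$. If $u=\overline x_h$, both sides vanish, so assume $u\neq\overline x_h$. For $\lambda\in\,]0,1]$ put $z_\lambda:=\lambda\overline x_h+(1-\lambda)u$, so that $z_\lambda-u=\lambda(\overline x_h-u)$ and $z_\lambda\to u$ as $\lambda\downarrow0$. Definition~\ref{def:quasar}, applied with $x=u$ and $K=\mathbb R^n$, gives
\[
h(z_\lambda)-h(u)\le-\kappa\lambda\big(h(u)-h(\overline x_h)\big)-\lambda\Big(1-\frac{\lambda}{2-\kappa}\Big)\frac{\kappa\gamma}{2}\|u-\overline x_h\|^2 .
\]
The Fr\'echet inequality along $z\to u$ gives
\[
h(z_\lambda)-h(u)\ge\lambda\langle v,\overline x_h-u\rangle+o(\lambda).
\]
Combining the two bounds, dividing by $\lambda>0$ and letting $\lambda\downarrow0$ yields
\[
\langle v,u-\overline x_h\rangle\ge\kappa\big(h(u)-h(\overline x_h)\big)+\frac{\kappa\gamma}{2}\|u-\overline x_h\|^2 .
\]

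\emph{Limiting step.} Let $v\in\partial^Lh(u)$. By definition there are $x_j\to u$ with $h(x_j)\to h(u)$ and $v_j\in\widehat\partial h(x_j)$ with $v_j\to v$. Each $x_j$ lies in ${\rm dom}\,h$ because $\widehat\partial h$ is only defined there, so the Fr\'echet inequality holds at $(x_j,v_j)$. Every term in it is continuous along this sequence: the inner product and the norm because $x_j\to u$ and $v_j\to v$, and the value term because $h(x_j)\to h(u)$ is assumed. Passing to the limit gives the first inequality in \eqref{eq:radial-subgradient}. This value-convergence requirement is the only delicate point, and the definition of $\partial^L h$ supplies it, so I do not expect a real obstacle here.

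\emph{Second inequality.} Apply \eqref{qwc:sconvex} (equivalently Lemma~\ref{lem:qg-quasar}):
\[
\kappa\big(h(u)-h(\overline x_h)\big)\ge\frac{\kappa^2\gamma}{2(2-\kappa)}\|u-\overline x_h\|^2 .
\]
Adding $\frac{\kappa\gamma}{2}\|u-\overline x_h\|^2$ to both sides, the total coefficient is
\[
\frac{\kappa\gamma}{2}\Big(\frac{\kappa}{2-\kappa}+1\Big)=\frac{\kappa\gamma}{2-\kappa},
\]
which is the right-hand side of \eqref{eq:radial-subgradient}.
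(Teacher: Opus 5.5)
Your proposal is correct and follows the paper's proof essentially step for step. You combine the Fr\'echet lower estimate along the segment toward $\overline x_h$ with the quasar-convex upper bound, pass to limiting subgradients using the value convergence built into the definition of $\partial^L h$, and finish with quadratic growth \eqref{qwc:sconvex}, spelling out the coefficient arithmetic $\frac{\kappa\gamma}{2}\bigl(\frac{\kappa}{2-\kappa}+1\bigr)=\frac{\kappa\gamma}{2-\kappa}$ that the paper leaves implicit.
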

\begin{proof}
If $u=\overline{x}_h$, the assertion is immediate. Otherwise, for a Fr\'echet subgradient, its defining lower estimate along $(1-t)u+t\overline{x}_h$ and the quasar-convex upper estimate give the first inequality after division by $t$ and passage to $t\downarrow0$. For a limiting subgradient, pass to the limit along $u_j\to u$, $h(u_j)\to h(u)$ and Fr\'echet subgradients $v_j\to v$. The second inequality follows from \eqref{qwc:sconvex}.
\end{proof}

The radial inequality also yields a comparison valid at every critical point of $F_\kappa$.
\begin{proposition}[Comparison at critical points]\label{prop:critical-comparison}
Assume (\ref{hyp:smooth})--(\ref{hyp:regularizer}). Put
\[
 C_\mu=\frac{(2-\kappa)+\sqrt{2(2-\kappa)(1+\mu/\gamma)}}{\kappa\gamma+2\mu}.
\]
For every critical point $u$ of $F_\kappa$ and every $x^*\in\argmin_{\R^n}f$,
\[
 \|u-x^*\|\le C_\mu\|\nabla g(u)\|.
\]
\end{proposition}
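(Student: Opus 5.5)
The plan is to combine the radial subgradient inequality of Lemma~\ref{lem:radial-subgradient} at $u$ with the global optimality of $x^*$ for $f$, strong convexity of $g$, and quadratic growth of $h$ at $\overline{x}_h$. This reduces everything to a scalar quadratic inequality in the three distances
\[
 d=\|u-x^*\|,\qquad e=\|x^*-\overline{x}_h\|,\qquad s=\|u-\overline{x}_h\|,
\]
and the norm $G=\|\nabla g(u)\|$.

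\textbf{Step 1 (upper bound on $h(u)$).} Since $u$ is critical for $F_\kappa$, we have $v:=-\kappa\nabla g(u)\in\partial^Lh(u)$, and in particular $u\in{\rm dom}\,h$. The first inequality in \eqref{eq:radial-subgradient}, divided by $\kappa$, gives
\[
 h(u)-h(\overline{x}_h)\le -\langle\nabla g(u),u-\overline{x}_h\rangle-\tfrac{\gamma}{2}s^2 .
\]

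\textbf{Step 2 (lower bound on $f(x^*)$).} By strong convexity of $g$,
\[
 g(x^*)\ge g(u)+\langle\nabla g(u),x^*-u\rangle+\tfrac{\mu}{2}d^2 ,
\]
and by \eqref{qwc:sconvex},
\[
 h(x^*)\ge h(\overline{x}_h)+a e^2,\qquad a:=\tfrac{\kappa\gamma}{2(2-\kappa)} .
\]
We insert both bounds into $f(x^*)\le f(u)=g(u)+h(u)$ and use Step~1 for $h(u)$. The $g(u)$ and $h(\overline{x}_h)$ terms cancel, and the two inner products combine into $\langle\nabla g(u),x^*-\overline{x}_h\rangle$. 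This gives
\[
 \tfrac{\mu}{2}d^2+a e^2+\tfrac{\gamma}{2}s^2\le -\langle\nabla g(u),x^*-\overline{x}_h\rangle\le G\,e .
\]

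\textbf{Step 3 (eliminating $s$ and $e$).} The triangle inequality gives $s\ge (d-e)_+$, so
\[
 \tfrac{\mu}{2}d^2+a e^2+\tfrac{\gamma}{2}(d-e)_+^2\le G e .
\]
For fixed $d$, the left-hand side minus $Ge$ must be $\le0$ for some $e\ge0$. I would handle the two cases $e\ge d$ and $e<d$ separately.

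If $e\ge d$, then $a e^2\le Ge$ gives $d\le e\le G/a=\frac{2(2-\kappa)}{\kappa\gamma}G$. This bound must be checked against $C_\mu$: either show it is dominated, or show that this case cannot be the binding one.

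If $e<d$, the constraint is a quadratic inequality in $(d,e)$. I would maximize $d$ subject to it, treating it as a quadratic form. Completing the square in $e$ gives the requirement
\[
 \Bigl(\tfrac{\mu+\gamma}{2}-\tfrac{\gamma^2}{4(a+\gamma/2)}\Bigr)d^2-\tfrac{\gamma G}{2(a+\gamma/2)}\,d-\tfrac{G^2}{4(a+\gamma/2)}\le0 .
\]
Solving this for its positive root should give the bound $d\le C_\mu G$. With $a+\gamma/2=\frac{\gamma}{2-\kappa}$, the leading coefficient simplifies to $\frac{\kappa\gamma+2\mu}{4}$, and the square root $\sqrt{2(2-\kappa)(1+\mu/\gamma)}$ in $C_\mu$ should come out of the discriminant.

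\textbf{Main obstacle.} The algebra of Step~3 is where I expect the difficulty. I need to verify that the positive root equals exactly $C_\mu G$, and that the $e\ge d$ case (equivalently, the unconstrained optimizer falling outside $e<d$) does not give a larger bound. If the casework does not close cleanly, I would fall back on working with the unconstrained quadratic bound $\tfrac{\gamma}{2}s^2\ge\tfrac{\gamma}{2}(d-e)^2$ without the positive part. That inequality is valid since $s\ge |d-e|$ whenever $d\ge e$, and it is simply weaker otherwise.
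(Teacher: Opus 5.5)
Your argument is, in substance, the paper's proof in scalar form, and it goes through once you commit to your fallback. However, the fallback is not optional, and your justification for it is backwards.

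In the branch $e\ge d$ you discard $\tfrac{\mu}{2}d^2$ and the $s$-term and get $d\le G/a=\tfrac{2(2-\kappa)}{\kappa\gamma}G$. This is \emph{not} dominated by $C_\mu G$ in general. For $\kappa=\gamma=\mu=1$ it gives $d\le 2G$, while $C_\mu=1$. More generally, $G/a$ does not depend on $\mu$ but $C_\mu\to0$ as $\mu\to\infty$. So the $(d-e)_+$ casework cannot close.

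The correct fix is the reverse triangle inequality, valid in every configuration:
\[
 s=\|(u-x^*)+(x^*-\overline{x}_h)\|\ge |d-e| .
\]
Hence $s^2\ge(d-e)^2$ holds also when $e>d$. There it is a \emph{stronger} statement than $s^2\ge(d-e)_+^2$, not a weaker one, and it is still true. Consequently
\[
 \tfrac{\mu+\gamma}{2}d^2+\tfrac{\gamma}{2-\kappa}e^2-(\gamma d+G)e\le0
\]
holds for the actual $e$, and therefore at the minimizing real $e$. This yields
\[
 (\kappa\gamma+2\mu)d^2\le 2(2-\kappa)Gd+\tfrac{2-\kappa}{\gamma}G^2,
\]
with no case distinction. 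Your leading coefficient $\tfrac{\kappa\gamma+2\mu}{4}$ and discriminant are right, since $(2-\kappa)^2+(2-\kappa)(\kappa+2\mu/\gamma)=2(2-\kappa)(1+\mu/\gamma)$. So the positive root is exactly $C_\mu G$.

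The paper uses the same three estimates: the radial inequality at $u$, strong convexity of $g$, and quadratic growth of $h$ at $x^*$. The difference is that it stays with vectors. It sets $z=u-x^*$, $t=x^*-\overline{x}_h$ and $p=\nabla g(u)$, expands $\|z+t\|^2$, and completes the square in the vector $t$. Only afterwards does it apply Cauchy--Schwarz, to $\langle z,p\rangle$.

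You apply Cauchy--Schwarz to $\langle p,t\rangle$ and the reverse triangle inequality first, then complete the square in the scalar $e$. Both routes give the identical quadratic in $d$. Nothing is lost by scalarizing, because the extremal configuration is collinear: $z$ is parallel to $p$ and $t$ is antiparallel to both. The vector route handles the sign of $d-e$ automatically, which is why the paper never meets the obstacle you flagged.
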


\begin{proof}
 Write $z=u-x^*$, $t=x^*-\overline{x}_h$ and $p=\nabla g(u)$. Since $-\kappa p\in \partial^Lh(u)$, Lemma~\ref{lem:radial-subgradient}, quadratic growth at $x^*$, and strong convexity of $g$ give
\begin{align*}
 0\le f(u)-f(x^*)&\le-\langle p,t\rangle-\frac\gamma2\|z+t\|^2-\frac{\kappa\gamma}{2(2-\kappa)}\|t\|^2-\frac\mu2\|z\|^2\\
 &=-\frac{\gamma+\mu}{2}\|z\|^2-\langle\gamma z+p,t\rangle-\frac\gamma{2-\kappa}\|t\|^2\\
 &\le-\frac{\gamma+\mu}{2}\|z\|^2+\frac{2-\kappa}{4\gamma}\|\gamma z+p\|^2.
\end{align*}
Completing the square in $t$ gives the last step. Expansion and Cauchy--Schwarz yield $(\kappa\gamma+2\mu)\|z\|^2\le2(2-\kappa)\|z\|\|p\|+((2-\kappa)/\gamma)\|p\|^2$. The nonnegative root of the associated quadratic equation is $C_\mu\|p\|$.
\end{proof}

\begin{theorem}[Geometric and asymptotic distances to a global minimizer]\label{thm:exact}
Assume (\ref{hyp:smooth})--(\ref{hyp:regularizer}), let
$0<\beta_k\equiv\beta<1/(\kappa L)$, and fix
$x^*\in\argmin_{\mathbb R^n}f$. Put
\[
 G^*=\|\nabla g(x^*)\|,\qquad
 M_\infty=\limsup_k\|\nabla g(x_k)\|,
\]
\[
 q=\frac{1-\kappa\beta\mu}{1+\beta\kappa\gamma/(2-\kappa)}\in\,]0,1[,
 \qquad
 R_f=\frac{(2-\kappa)G^*}{\kappa\gamma}
 \left(1+\frac{\kappa\gamma+(2-\kappa)L}{\gamma+(2-\kappa)\mu}\right).
\]
Let $C_\mu$ be the constant in Proposition~\ref{prop:critical-comparison}.
Then:
\begin{enumerate}
\item[$(a)$] For every $k\geq1$,
\begin{equation}\label{eq:geometric-neighborhood}
 \|x_k-x^*\|
 \leq q^{k-1}\left(\|x_1-x^*\|+\frac{2(2-\kappa)G^*}{\kappa\gamma}\right)
       +(1-q^{k-1})R_f,
\end{equation}
and consequently
\begin{equation}\label{eq:geometric-distance-global}
 \operatorname{dist}\bigl(x_k,\overline{\mathbb B}(x^*,R_f)\bigr)
 \leq q^{k-1}
       \left(\|x_1-x^*\|+\frac{2(2-\kappa)G^*}{\kappa\gamma}-R_f\right)_+.
\end{equation}
The radius $R_f$ is independent of $\beta$.
\item[$(b)$] The asymptotic distance satisfies
\begin{equation}\label{conv:ball}
 \limsup_k\|x_k-x^*\|
 \leq \min\{R_f,\ C_\mu M_\infty\}.
\end{equation}
\end{enumerate}
If $G^*=0$, then $R_f=0$ and
$\|x_k-x^*\|\leq q^{k-1}\|x_1-x^*\|$; in particular, the
sequence converges globally and linearly to $x^*$.
\end{theorem}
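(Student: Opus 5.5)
The plan is to reduce part~$(a)$ to a one-step affine recursion
\[
 \|x_{k+1}-x^*\|\le q\,\|x_k-x^*\|+(1-q)R_f ,
\]
or an equivalent recursion for a shifted error $e_k$. Unrolling by induction then gives \eqref{eq:geometric-neighborhood}. Subtracting $R_f$ from the right-hand side of \eqref{eq:geometric-neighborhood} leaves $q^{k-1}(A-R_f)$, where $A=\|x_1-x^*\|+2(2-\kappa)G^*/(\kappa\gamma)$. Since $\operatorname{dist}(x_k,\overline{\mathbb B}(x^*,R_f))=(\|x_k-x^*\|-R_f)_+$, this yields \eqref{eq:geometric-distance-global}. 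The constant $2(2-\kappa)G^*/(\kappa\gamma)$ in $A$ suggests that the recursion is naturally run for $e_k=\|x_k-\overline{x}_h\|$ and then transferred to $x^*$ through $\|x^*-\overline{x}_h\|$, at a cost of $(2-\kappa)G^*/(\kappa\gamma)$ at each end.

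\emph{Step 1: locating $x^*$ relative to $\overline{x}_h$.} Since $x^*$ minimizes $f$ and $g$ is $C^1$, Fermat's rule and the sum rule give $-\nabla g(x^*)\in\partial^Lh(x^*)$. Lemma~\ref{lem:radial-subgradient} at $u=x^*$ then gives $G^*\|x^*-\overline{x}_h\|\ge\frac{\kappa\gamma}{2-\kappa}\|x^*-\overline{x}_h\|^2$, hence $d:=\|x^*-\overline{x}_h\|\le(2-\kappa)G^*/(\kappa\gamma)$.

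\emph{Step 2: one-step estimate.} Proximal optimality gives $v:=(y_k-x_{k+1})/\beta\in\partial^Lh(x_{k+1})$. Applying Lemma~\ref{lem:radial-subgradient} at $u=x_{k+1}$ with $a=x_{k+1}-\overline{x}_h$ gives $\langle y_k-\overline{x}_h-a,a\rangle\ge\frac{\beta\kappa\gamma}{2-\kappa}\|a\|^2$. By Cauchy--Schwarz this becomes $\bigl(1+\beta\kappa\gamma/(2-\kappa)\bigr)\|a\|\le\|y_k-\overline{x}_h\|$. Next split
\[
 y_k-\overline{x}_h=\bigl[x_k-x^*-\kappa\beta(\nabla g(x_k)-\nabla g(x^*))\bigr]+(x^*-\overline{x}_h)-\kappa\beta\nabla g(x^*).
\]
Because $\kappa\beta\le1/L$ and $g$ is $\mu$-strongly convex with $L$-Lipschitz gradient, the standard gradient-step contraction bounds the bracket by $(1-\kappa\beta\mu)\|x_k-x^*\|$. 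Dividing by $1+\beta\kappa\gamma/(2-\kappa)$ produces exactly the factor $q$, plus the additive terms $d$ and $\kappa\beta G^*$.

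\emph{Main obstacle: making the radius independent of $\beta$.} With the naive splitting above, the additive term contains $d/(1+\beta\kappa\gamma/(2-\kappa))$, which is not $O(\beta)$. Its fixed point, obtained by dividing by $1-q=O(\beta)$, therefore blows up as $\beta\to0$. To avoid this, I would not bound $\|y_k-\overline{x}_h\|$ crudely. Instead I would keep the inner product and use the critical relation $-\kappa\nabla g(x^*)$ at $x^*$ to cancel the $O(1)$ part of $d$. Concretely, write $a=(x_{k+1}-x^*)+(x^*-\overline{x}_h)$ and estimate $\langle v+\kappa\nabla g(x^*),x^*-\overline{x}_h\rangle$ by $d$ times $\kappa L\|x_k-x^*\|+\|x_{k+1}-x_k\|/\beta$-type terms, using Lipschitz continuity of $\nabla g$; this is presumably where $L$ enters $R_f$. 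Every additive term should then carry a factor $\beta$, so that after division by $1-q=\beta\bigl(\kappa\gamma/(2-\kappa)+\kappa\mu\bigr)/\bigl(1+\beta\kappa\gamma/(2-\kappa)\bigr)$ the fixed point is bounded by $R_f$, with the denominator $\gamma+(2-\kappa)\mu$ appearing. If direct cancellation proves awkward, my fallback is Proposition~\ref{propsumm} with $x=x^*$ and $f(x_{k+1})\ge f(x^*)$, choosing $\delta$ to balance the residual term.

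\emph{Part $(b)$.} The bound $\limsup_k\|x_k-x^*\|\le R_f$ follows from \eqref{eq:geometric-neighborhood}, since $q^{k-1}\to0$. For the $C_\mu M_\infty$ bound, choose a subsequence realizing $\limsup_k\|x_k-x^*\|$; by Lemma~\ref{lem:bounded} it has a further subsequence converging to some $\widehat x$. By Theorem~\ref{thm:critical}, $\widehat x$ is critical for $F_\kappa$, so Proposition~\ref{prop:critical-comparison} gives $\|\widehat x-x^*\|\le C_\mu\|\nabla g(\widehat x)\|\le C_\mu M_\infty$ by continuity of $\nabla g$. Finally, if $G^*=0$, then $R_f=0$ and the constant added to $\|x_1-x^*\|$ in $A$ vanishes, so \eqref{eq:geometric-neighborhood} reduces to $\|x_k-x^*\|\le q^{k-1}\|x_1-x^*\|$.
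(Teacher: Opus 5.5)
Your derivation of \eqref{eq:geometric-distance-global} from \eqref{eq:geometric-neighborhood}, the $C_\mu M_\infty$ half of part~$(b)$, and the case $G^*=0$ are sound. The critical-point argument via Theorem~\ref{thm:critical} and Proposition~\ref{prop:critical-comparison} works for every $\mu\ge0$, so you can bypass the paper's $\delta$-optimization. Part~$(a)$, however, has a genuine gap, and your own splitting in Step~2 creates it.

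By centring the gradient-step contraction at $x^*$ you inject the $O(1)$ vector $x^*-\overline{x}_h$ into the recursion. As you notice, the fixed point then scales like $d/(1-q)=O(1/\beta)$. The remedy you sketch does not fix this, for three reasons.
\begin{itemize}
\item The relation available at $x^*$ is $-\nabla g(x^*)\in\partial^Lh(x^*)$, without the factor $\kappa$, since $x^*$ minimizes $f$ and not $F_\kappa$.
\item Because $h$ is nonconvex, no monotonicity pairs $v\in\partial^Lh(x_{k+1})$ with a subgradient at $x^*$; the only subgradient inequality at your disposal is the radial one toward $\overline{x}_h$.
\item $\beta\langle v+\kappa\nabla g(x^*),x^*-\overline{x}_h\rangle$ contains $\langle x_k-x_{k+1},x^*-\overline{x}_h\rangle$, which is of order $d\,\|x_{k+1}-x_k\|$ rather than $O(\beta)$.
\end{itemize}
The fallback through Proposition~\ref{propsumm} does not recover the statement either. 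It gives a recursion for squared distances with factor $(1-\kappa\beta\mu)/(1+\beta\kappa^2\gamma/(2+\delta))$, whose square root is strictly larger than $q$. It also carries a residual $D_k\|x_{k+1}-y_k\|^2$ with $D_k$ of order $1/\beta$, which the step term does not control in general. So neither the stated $q$ nor $R_f$ can come out of it.

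The missing idea is the one your opening paragraph already suggests. Run the recursion for $d_k=\|x_k-\overline{x}_h\|$ and centre the contraction at $\overline{x}_h$ itself:
\[
 y_k-\overline{x}_h=\bigl[(x_k-\kappa\beta\nabla g(x_k))-(\overline{x}_h-\kappa\beta\nabla g(\overline{x}_h))\bigr]-\kappa\beta\nabla g(\overline{x}_h).
\]
This gives $\|y_k-\overline{x}_h\|\le(1-\kappa\beta\mu)d_k+\kappa\beta\|\nabla g(\overline{x}_h)\|$. Combined with your prox estimate, it yields $d_{k+1}\le q\,d_k+(1-q)c$ with $c=\|\nabla g(\overline{x}_h)\|/(\mu+\gamma/(2-\kappa))$, because $1-q=\kappa\beta(\mu+\gamma/(2-\kappa))/(1+\beta\kappa\gamma/(2-\kappa))$. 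The additive term is automatically $O(\beta)$, so $c$ does not depend on $\beta$.

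The constant $L$ enters only through $\|\nabla g(\overline{x}_h)\|\le G^*+L\|x^*-\overline{x}_h\|$. With your Step~1, this gives $c\le R_f-(2-\kappa)G^*/(\kappa\gamma)$. Now unroll the recursion and use $\|x_k-x^*\|\le d_k+\|x^*-\overline{x}_h\|$ and $d_1\le\|x_1-x^*\|+\|x^*-\overline{x}_h\|$. This gives exactly \eqref{eq:geometric-neighborhood}; the factor $2$ in $2(2-\kappa)G^*/(\kappa\gamma)$ comes from these two transfers.
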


\begin{proof}
\noindent$(a)$ If $u\in{\rm Prox}_{\beta h}(y)$, proximal optimality and
Lemma~\ref{lem:radial-subgradient} give
\[
 \langle y-\overline x_h,u-\overline x_h\rangle
 \geq\left(1+\frac{\beta\kappa\gamma}{2-\kappa}\right)
       \|u-\overline x_h\|^2.
\]
By Cauchy--Schwarz,
\[
 \left(1+\frac{\beta\kappa\gamma}{2-\kappa}\right)
 \|u-\overline x_h\|\leq\|y-\overline x_h\|.
\]
The gradient map $I-\kappa\beta\nabla g$ is
$(1-\kappa\beta\mu)$-Lipschitz. Indeed, if $\mu<L$, convex cocoercivity
applied to $g-\mu\|\cdot\|^2/2$, whose quadratic upper model has
modulus $L-\mu$, gives, with $s=\nabla g(x)-\nabla g(z)$,
\[
 (L+\mu)\langle x-z,s\rangle
 \geq\|s\|^2+\mu L\|x-z\|^2.
\]
Together with $\|s\|\geq\mu\|x-z\|$ and
$\kappa^2\beta^2-2\kappa\beta/(L+\mu)\leq0$, this yields
\begin{align*}
 \|x-z-\kappa\beta s\|^2
 &\leq\left(1-\frac{2\kappa\beta\mu L}{L+\mu}\right)\|x-z\|^2
       +\left(\kappa^2\beta^2-\frac{2\kappa\beta}{L+\mu}\right)\|s\|^2\\
 &\leq(1-\kappa\beta\mu)^2\|x-z\|^2.
\end{align*}
If $\mu=L$, the shifted function is affine and the same
conclusion follows directly.

Set $d_k=\|x_k-\overline x_h\|$.
The two preceding estimates imply
\[
 d_{k+1}\leq qd_k+
 \frac{\kappa\beta\|\nabla g(\overline x_h)\|}
      {1+\beta\kappa\gamma/(2-\kappa)}.
\]
Iterating this recurrence and using the definition of $q$ gives
\[
 d_k\leq q^{k-1}d_1+
       (1-q^{k-1})\frac{\|\nabla g(\overline x_h)\|}
                            {\mu+\gamma/(2-\kappa)}.
\]
Fermat's rule at $x^*$ gives
$-\nabla g(x^*)\in\partial^Lh(x^*)$.
The radial inequality therefore implies
\[
 \frac{\kappa\gamma}{2-\kappa}\|x^*-\overline x_h\|^2
 \leq\langle-\nabla g(x^*),x^*-\overline x_h\rangle
 \leq G^*\|x^*-\overline x_h\|,
\]
so $\|x^*-\overline x_h\|\leq(2-\kappa)G^*/(\kappa\gamma)$.
Lipschitz continuity and the definition of $R_f$ also give
\[
 \frac{\|\nabla g(\overline x_h)\|}{\mu+\gamma/(2-\kappa)}
 \leq R_f-\frac{(2-\kappa)G^*}{\kappa\gamma}.
\]
Consequently,
\begin{align*}
 \|x_k-x^*\|
 &\leq d_k+\|x^*-\overline x_h\|\\
 &\leq q^{k-1}\left(\|x_1-x^*\|+\frac{(2-\kappa)G^*}{\kappa\gamma}\right)\\
 &\quad +(1-q^{k-1})\left(R_f-\frac{(2-\kappa)G^*}{\kappa\gamma}\right)
       +\frac{(2-\kappa)G^*}{\kappa\gamma}\\
 &=q^{k-1}\left(\|x_1-x^*\|+\frac{2(2-\kappa)G^*}{\kappa\gamma}\right)
       +(1-q^{k-1})R_f.
\end{align*}
This proves \eqref{eq:geometric-neighborhood}; subtracting $R_f$
and taking positive parts proves \eqref{eq:geometric-distance-global}.
In particular, $\limsup_k\|x_k-x^*\|\leq R_f$.

\medskip
\noindent$(b)$ Apply Proposition~\ref{propsumm} at $x=x^*$.
Discard the nonnegative objective gap and the nonpositive step term.
By Lemma~\ref{lem:bounded}, the iterates are bounded and
$x_{k+1}-x_k\to0$; hence
\[
 \limsup_k\|x_{k+1}-y_k\|^2
 =\kappa^2\beta^2M_\infty^2.
\]
Taking upper limits in the composite inequality gives, for every $\delta>0$,
\[
 \limsup_k\|x_k-x^*\|^2\leq K_\delta^2M_\infty^2,
 \qquad
 K_\delta^2=
 \frac{(2-\kappa)(2+\delta-\kappa)(2+\delta)}
 {\kappa\gamma\delta\,[\kappa\gamma+\mu(2+\delta)]}.
\]
The definition of $C_\mu$ implies
$(\kappa\gamma+2\mu)C_\mu^2=2(2-\kappa)C_\mu+(2-\kappa)/\gamma$.
A direct expansion then gives
\begin{equation}\label{eq:delta-optimization}
 K_\delta^2-C_\mu^2
 =\frac{\big((\kappa\gamma C_\mu-(2-\kappa))\delta-2(2-\kappa)\big)^2}
 {2\kappa\gamma\delta\,[\kappa\gamma+\mu(2+\delta)]}.
\end{equation}
If $\mu<\kappa\gamma/(2-\kappa)$, then $\kappa\gamma C_\mu>2-\kappa$ and
the choice
\[
 \delta_* = \frac{2(2-\kappa)}{\kappa\gamma C_\mu-(2-\kappa)}
\]
gives $K_{\delta_*}=C_\mu$.
For $\mu=0$, this reduces to $\delta_*=\sqrt{2(2-\kappa)}$.
At $\mu=\kappa\gamma/(2-\kappa)$, letting $\delta\to+\infty$ in
\eqref{eq:delta-optimization} gives the same bound.

If $\mu>\kappa\gamma/(2-\kappa)$, take a subsequence attaining
$\limsup_k\|x_k-x^*\|$ and, by boundedness, a convergent
subsequence with limit $u$. Theorem~\ref{thm:critical} makes $u$
a critical point of $F_\kappa$. Proposition~\ref{prop:critical-comparison}
and continuity of $\nabla g$ yield
\[
 \limsup_k\|x_k-x^*\|
 =\|u-x^*\|\leq C_\mu\|\nabla g(u)\|
 \leq C_\mu M_\infty.
\]
This establishes the comparison with $C_\mu M_\infty$ for all
$\mu\geq0$. Taking the minimum with the bound $R_f$ from part $(a)$ proves
\eqref{conv:ball}. The assertion for $G^*=0$ follows from part $(a)$.
\end{proof}

The geometric rate in part $(a)$ concerns the ball of radius $R_f$;
part $(b)$ gives the best of the available asymptotic distance bounds.

\begin{example}[Comparison of the two bounds]\label{ex:complementary-bounds}
Let $h(x)=x^2/6$ and $g_a(x)=a(x-1)^2/2$, with
$\kappa=1/2$, $\gamma=\beta=1$, and $a\in\{1/10,1\}$.
These functions satisfy (H), with $\mu=L=a$. The iteration is
\[
 x_{k+1}=\frac34\left(1-\frac a2\right)x_k+\frac{3a}{8},
\]
so it converges to $\widehat x=3a/(2+3a)$, whereas the global
minimizer of $f$ is $x^*=3a/(1+3a)$. Hence
\[
 G^*=\frac{a}{1+3a},\qquad M_\infty=\frac{2a}{2+3a}.
\]
Substitution gives the following values:
\begin{center}
\begin{tabular}{ccc}
\toprule
$a$ & $R_f$ & $C_\mu M_\infty$\\
\midrule
$1/10$ & $0.361204$ & $0.411999$\\
$1$ & $1.350000$ & $0.631918$\\
\bottomrule
\end{tabular}
\end{center}
The ordering reverses between the two choices, so neither bound
dominates the other in general.
\end{example}

{
\begin{example}[Weighting bias with a nonconvex regularizer] \label{ex:weighting-bias}
Let $g(x)=(x-3)^2$, $h(x)=x^2-\tfrac32x^4+x^6$ and $\kappa=\gamma=\beta=\tfrac12$. Here, $L=\mu=2$, and $0<\beta<1/(\kappa L)=1$. Since $h(x)=x^2((x^2-3/4)^2+7/16)$, its unique minimizer is zero. Moreover, $h''(x)=30(x^2-3/10)^2-7/10$, so $h$ is nonconvex. Nevertheless, it is globally $(\kappa, \gamma)$-strongly quasar-convex by \eqref{diff:squasar}, since
\[
 xh'(x)-\kappa h(x)-\frac{\kappa \gamma}{2}x^2 = x^2 \left(\frac{11}{2}(x^2-21/44)^2+\frac{43}{352} \right) \geq 0.
\]
The proximal objective has second derivative at least $13/10$, so each proximal minimizer is unique. The iteration satisfies
\[
 y_k = \frac{x_k+3}{2}, \qquad y_k = x_{k+1} + \tfrac12h'(x_{k+1}).
\]
The derivative of $u\mapsto u+\tfrac12h'(u)$ is at least $13/20$;
thus the update is a contraction with factor $10/13$.
Since $h'(1)=2$, its fixed point is $\widehat x=1$, and every initial point gives
\[
 |x_k-1|\leq(10/13)^{k-1}|x_1-1|.
\]
Finally, $F_\kappa''\geq3/10$, so $\widehat x$ is its unique global minimizer,
whereas
\[
 F_\kappa'(1)=\tfrac12(-4)+2=0,\qquad f'(1)=-4+2=-2\ne0.
\]
This example separates $\kappa$-criticality from ordinary criticality
with a nonconvex regularizer satisfying the global hypotheses.
\end{example}

The general results establish critical-point convergence for $F_\kappa$
and quantitative distance estimates relative to global minimizers of $f$;
they do not assert global optimality of the limit under the general hypotheses.
The common-minimizer case of Theorem~\ref{thm:exact}
ensures global linear convergence to the common global minimizer.
}

\section{Applications to  $\ell_p$-Regularized Least Squares ($0<p<1$)} \label{sec:04}

\subsection{{Model, Quasar-Convexity and Confinement}}\label{sec:lp-model}

We apply the framework of Section~\ref{sec:03} to the classical nonconvex sparse recovery problem
\begin{equation}\label{eq:lp-problem}
 \min_{x\in \R^n}\ F(x) := \|Ax-b \|_2^2 + \rho \|x\|_p^p,
\end{equation}
where $A\in \R^{m\times n}$, $b\in\R^m$, $0<p<1$ and $\rho>0$. {This model arises} in sparse signal/image recovery, LASSO-type estimation{~\cite{KF}}, and compressed sensing; the {$\ell_p$ power penalty} {has been studied as a sparsity-promoting alternative} to $\ell_1$ (see {\cite{Lu,Survey,XCXZ}}).

{Set $g(x)=\|Ax-b\|_2^2$ and $h(x)=\rho\|x\|_p^p$. The quasar-convexity of this penalty on bounded sets was established in \cite[Example~8]{BLL}; we recall its modulus below before applying Algorithm~\ref{alg:pg-sqcv}.}

{Algorithm~\ref{alg:pg-sqcv} decreases} the {weighted objective}
{
\[
 F_\kappa=\kappa g+h
 =\kappa\left(\|Ax-b\|_2^2+\frac\rho\kappa\|x\|_p^p\right).
\]
}
{Thus its effective regularization parameter is $\rho/\kappa$, while $F$ in \eqref{eq:lp-problem} is the comparison objective. If a prescribed regularization parameter $\lambda$ is to be used in the objective optimized} by {the iteration, taking $h=\kappa\lambda\|\cdot\|_p^p$ gives the usual proximal-gradient method} for {that objective with step $\kappa\beta$}.

\begin{lemma} \label{lem:scale}
 If $h$ is $(\kappa, \gamma)$-strongly quasar-convex {on a star-shaped set $K$} with respect to $\overline{x}_{h} \in \argmin_{\R^n}h$, then for every $\rho>0$, $\rho h$ is $(\kappa,\rho \gamma)$-strongly quasar-convex {on $K$} with respect to the same point $\overline{x}_{h}$.
\end{lemma}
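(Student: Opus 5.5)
The plan is to verify Definition~\ref{def:quasar} for $\rho h$ directly by multiplying the defining inequality for $h$ by $\rho>0$. Multiplication by a positive constant preserves inequalities, so the only bookkeeping is to match the quadratic term with the new modulus.

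First we check the structural requirements. Since $\rho>0$, the function $\rho h$ is proper and $\operatorname{dom}(\rho h)=\operatorname{dom}h$. Minimizers are unchanged: $\argmin_{\R^n}(\rho h)=\argmin_{\R^n}h$, so $\overline x_h\in K\cap\argmin_{\R^n}(\rho h)$. The set $K$ is still star-shaped at $\overline x_h$, so the setting of Definition~\ref{def:quasar} is met with the same reference point.

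Next, fix $\lambda\in[0,1]$ and $x\in K\cap\operatorname{dom}(\rho h)=K\cap\operatorname{dom}h$. The hypothesis gives
\[
 h(\lambda\overline x_h+(1-\lambda)x)\le\kappa\lambda h(\overline x_h)+(1-\kappa\lambda)h(x)-\lambda\Bigl(1-\frac{\lambda}{2-\kappa}\Bigr)\frac{\kappa\gamma}{2}\|x-\overline x_h\|^2 .
\]
Both sides are finite, because $x\in\operatorname{dom}h$ and the right-hand side is finite. Multiplying by $\rho>0$ yields
\[
 (\rho h)(\lambda\overline x_h+(1-\lambda)x)\le\kappa\lambda(\rho h)(\overline x_h)+(1-\kappa\lambda)(\rho h)(x)-\lambda\Bigl(1-\frac{\lambda}{2-\kappa}\Bigr)\frac{\kappa(\rho\gamma)}{2}\|x-\overline x_h\|^2 ,
\]
which is exactly the $(\kappa,\rho\gamma)$-inequality for $\rho h$.

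There is no real obstacle here. The only point needing care is that $\rho>0$, so the inequality direction and the set of minimizers are preserved and no $\infty\cdot 0$ issues arise. Also, $\rho\gamma\ge0$ is an admissible modulus, and it is positive whenever $\gamma>0$.
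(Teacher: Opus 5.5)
Your proposal is correct and follows the same route as the paper's proof: both multiply the defining inequality of Definition~\ref{def:quasar} by $\rho>0$, absorb $\rho$ into the new modulus $\rho\gamma$, and note that $\argmin_{\R^n}(\rho h)=\argmin_{\R^n}h$. Your explicit checks on the domain, properness and finiteness of both sides are slightly more detailed than the paper's, but they do not change the argument.
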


\begin{proof}
 Multiplying the inequality in Definition~\ref{def:quasar} by $\rho>0$ preserves its direction. The terms $\rho h(\overline{x}_{h})$, $\rho h(y)$ and $\rho h(\lambda\overline{x}_{h}+(1-\lambda)y)$ replace their unscaled counterparts, while the last term becomes $-\lambda\big(1-\frac{\lambda}{2-\kappa} \big)\frac{\kappa(\rho\gamma)}{2}\|y-\overline{x}_{h}\|^2$. Since $\rho>0$, {$\argmin_{\R^n}\rho h= \argmin_{\R^n}h$}.
\end{proof}

\begin{proposition}\label{prop:lp-quasar-exact} {\rm (see \cite[Example 8]{BLL})}
 Let $0<p<1$ and $h(x) := \|x\|_p^p= \sum_{i=1}^n \lvert x_i \rvert^p$. Then $\overline{x}_{h}=0 \in \argmin_{\R^n}h$, and for every $\kappa \in \, ]0, p[$ and every $R>0$, $h$ is $(\kappa, \gamma)$-strongly quasar-convex with respect to $\overline{x}_{h}=0$ on $K = \overline{B}(0,R)$, with the sharp modulus
\[
 \gamma(\kappa,p,R) = \frac{2(p-\kappa)}{\kappa} R^{p-2}.
\]
\end{proposition}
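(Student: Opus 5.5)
The plan is to reduce the defining inequality of Definition~\ref{def:quasar} to a one-variable inequality in $\lambda$. First we compare $h$ with $\|x\|^2$ on the ball. Then we check the scalar inequality by elementary calculus. Finally we test the inequality at a boundary point of $K$ to obtain sharpness.

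\textbf{Setup.} Clearly $0\in\argmin h$ and $K=\overline{B}(0,R)$ is star-shaped at $0$. Since $h$ is positively homogeneous of degree $p$, we have $h(\lambda\cdot 0+(1-\lambda)x)=(1-\lambda)^p h(x)$. Put $\gamma=\gamma(\kappa,p,R)$, so that $\kappa\gamma/2=(p-\kappa)R^{p-2}$. The required inequality becomes
\[
 \bigl(1-\kappa\lambda-(1-\lambda)^p\bigr)h(x)\;\ge\;\lambda\Bigl(1-\frac{\lambda}{2-\kappa}\Bigr)(p-\kappa)R^{p-2}\|x\|^2
\]
for all $\lambda\in[0,1]$ and $\|x\|\le R$.

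\textbf{Reduction to a scalar inequality.} Since $p<1$, concavity of $t\mapsto t^{p/2}$ (equivalently, $\ell_p$ dominates $\ell_2$) gives $h(x)=\sum_i|x_i|^p\ge\|x\|^p$. On the ball we also have $\|x\|^p=\|x\|^{p-2}\|x\|^2\ge R^{p-2}\|x\|^2$, because $p-2<0$. The coefficient $\phi(\lambda):=1-\kappa\lambda-(1-\lambda)^p$ is nonnegative: it vanishes at $0$ and, as shown below, dominates a nonnegative function. It therefore suffices to prove
\[
 \psi(\lambda):=\phi(\lambda)-(p-\kappa)\lambda\Bigl(1-\frac{\lambda}{2-\kappa}\Bigr)\ge0,\qquad \lambda\in[0,1].
\]

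\textbf{Verifying $\psi\ge0$.} We have $\psi(0)=0$ and $\psi'(0)=(p-\kappa)-(p-\kappa)=0$; this exact cancellation of first-order terms is why the constant is $2(p-\kappa)/\kappa$. The second derivative is
\[
 \psi''(\lambda)=p(1-p)(1-\lambda)^{p-2}+\frac{2(p-\kappa)}{2-\kappa}>0
\]
on $[0,1[$. Hence $\psi$ is convex with a zero minimum at $0$, and by continuity $\psi\ge0$ on $[0,1]$. The hypothesis $\kappa<p$ is used here and also guarantees $\gamma>0$.

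\textbf{Sharpness.} This is the only delicate point. The second-order terms are not tight, so sharpness must come from the first-order behaviour as $\lambda\downarrow0$ at a point where both reductions are equalities. Take $x=Re_1$, for which $h(x)=R^p=\|x\|^p=R^{p-2}\|x\|^2$. Suppose the inequality held with some $\gamma'$ in place of $\gamma$. Dividing by $\lambda$ and letting $\lambda\downarrow0$ gives $(p-\kappa)R^p\ge\frac{\kappa\gamma'}{2}R^2$, that is, $\gamma'\le\gamma(\kappa,p,R)$. So the modulus cannot be improved.
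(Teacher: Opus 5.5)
Your proof is correct. Your sharpness argument is the same as the paper's: evaluate at $x=Re_1$, divide by $\lambda$, and let $\lambda\downarrow0$. The two routes differ on the validity of the modulus itself, for which the paper gives no argument of its own. It imports the unit-ball case from \cite[Example~8]{BLL} and rescales. Using $h(Ru)=R^p h(u)$ and $\|Ru\|^2=R^2\|u\|^2$, the unit-ball modulus $2(p-\kappa)/\kappa$ becomes $2(p-\kappa)R^{p-2}/\kappa$.

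You instead prove the inequality directly on $\overline{B}(0,R)$. You use the chain $h(x)\ge\|x\|^p\ge R^{p-2}\|x\|^2$, together with the scalar convexity argument for $\psi$: $\psi(0)=\psi'(0)=0$ and $\psi''>0$ on $[0,1[$. You also correctly note that $\phi\ge0$ is needed before multiplying the norm comparison by $\phi(\lambda)$. In your version the radius enters only through $\|x\|^{p-2}\ge R^{p-2}$, so no rescaling step is needed.

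Your route is self-contained and transparent. It shows exactly where $\kappa<p$ is used, and that the constant is dictated by the first-order cancellation at $\lambda=0$. It also shows that both reductions become equalities at $x=\pm Re_i$, which explains why the sharpness test is performed at such a point. The paper's route is shorter, but its validity claim rests entirely on the cited example.
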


{The stated modulus follows from \cite[Example~8]{BLL} by rescaling the unit ball to radius $R$. Sharpness follows by evaluating the defining inequality at $x=(R,0,\ldots,0)$, dividing by $\lambda>0$, and letting $\lambda\downarrow0$.}

\begin{remark} \label{rem:confinement}
 Every result invoking Proposition \ref{prop:lp-quasar-exact} is stated relative to $K=\overline{B}(0,R)$, since the quasar-convexity modulus $\gamma(\kappa,p,R)\to0$ as $R\to+\infty$. The penalty is not strongly quasar-convex on the whole space, because $\|y\|_p^p/\|y\|_2^2\to0$ as $\|y\|_2\to+\infty$. Applying Theorem~\ref{thm:exact} therefore requires {the iterates and comparison points to belong to} the same {ball} used to compute $\gamma$. {The following proposition supplies such a ball directly} from {the weighted objective}, {without using} $\gamma$.
\end{remark}

{
\begin{proposition}[Explicit confinement]\label{prop:lp-confinement}
 Fix $0<\kappa<p<1$ and a constant step $0<\beta<1/(\kappa L)$, where $L=2\|A\|_2^2$ and $1/0:=+\infty$. Let $\{x_k\}_{k}$ be generated by Algorithm~\ref{alg:pg-sqcv} from $x_1\in\mathbb R^n$. Set $E=F_\kappa(x_1)$ and choose
 \begin{equation}\label{eq:lp-radius}
 R>\max\left\{(E/\rho)^{1/p},(\|b\|_2^2/\rho)^{1/p}\right\}.
 \end{equation}
 All iterates and all global minimizers of either $F$ or $F_\kappa$ belong to $B(0,R)$.
\end{proposition}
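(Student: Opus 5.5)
The plan is to use only monotonicity of $F_\kappa$ along the iterates and nonnegativity of $g$; no quasar-convexity modulus enters. The key observation is that $g(x)=\|Ax-b\|_2^2\ge0$ and $\kappa>0$. Hence, for every $x$,
\[
\rho\|x\|_p^p=h(x)\le F_\kappa(x)
\qquad\text{and}\qquad
\rho\|x\|_p^p\le F(x).
\]
We also use the elementary inequality $\|x\|_2\le\|x\|_p$ for $0<p<1$. This holds because $\ell_p$ quasi-norms decrease in $p$. Equivalently, $\|x\|_\infty^p\le\sum_i|x_i|^p$ and $\sum_i x_i^2\le \|x\|_\infty^{2-p}\sum_i|x_i|^p\le(\sum_i|x_i|^p)^{2/p}$. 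It follows that
\[
\|x\|_2\le\|x\|_p=(\|x\|_p^p)^{1/p}.
\]

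\textbf{Iterates.} Lemma~\ref{lem:kappa-decrease} is proved using only proximal optimality and the descent lemma for $g$, and it never invokes (H2). It therefore applies verbatim here, with $L=2\|A\|_2^2$ the Lipschitz constant of $\nabla g$. The proximal operator of $\beta\rho\|\cdot\|_p^p$ is nonempty because the proximal objective is lsc and coercive. If $\|A\|_2=0$, then $\beta$ is an arbitrary positive number and $L=0$, and the descent lemma holds trivially since $g$ is constant. For $\beta<1/(\kappa L)$ we get $F_\kappa(x_k)\le F_\kappa(x_1)=E$ for all $k$; if the algorithm stops, the constant extension preserves this. Then
\[
\|x_k\|_2\le(\|x_k\|_p^p)^{1/p}\le(E/\rho)^{1/p}<R,
\]
so every iterate lies in the open ball $B(0,R)$.

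\textbf{Minimizers.} Global minimizers of $F$ and of $F_\kappa$ exist: both functions are continuous, and they are coercive since $h$ is coercive and $g\ge0$. For a global minimizer $x^*$ of $F$, compare with the point $0$ to get $F(x^*)\le F(0)=\|b\|_2^2$, hence
\[
\rho\|x^*\|_p^p\le\|b\|_2^2
\qquad\text{and}\qquad
\|x^*\|_2\le(\|b\|_2^2/\rho)^{1/p}<R.
\]
For a global minimizer $\tilde x$ of $F_\kappa$, compare again with $0$. This gives $F_\kappa(\tilde x)\le\kappa\|b\|_2^2\le\|b\|_2^2$, and the same bound follows. Alternatively, $F_\kappa(\tilde x)\le F_\kappa(x_1)=E$ also works. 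Both radii are dominated by the choice \eqref{eq:lp-radius}.

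\textbf{Main obstacle.} The only delicate point is justifying that the descent Lemma~\ref{lem:kappa-decrease} holds without (H2) globally, since $h$ here is only quasar-convex on balls. I will check this by noting that its proof uses nothing about $h$ beyond the proximal inequality with comparison point $z=x_k$. I will also verify the norm inequality $\|x\|_2\le\|x\|_p$ carefully, since that is where the exponent $1/p$ in \eqref{eq:lp-radius} comes from.
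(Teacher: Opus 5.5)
Your proof is correct and follows the paper's argument essentially step for step. The shared ingredients are sufficient decrease of $F_\kappa$ from proximal optimality at $z=x_k$ plus the descent lemma, then $g\geq0$ together with $\|x\|_2\le\|x\|_p$ to bound the iterates, and finally comparison of global minimizers with the point $0$, using $F_\kappa(0)=\kappa\|b\|_2^2\le\|b\|_2^2$; your observation that Lemma~\ref{lem:kappa-decrease} never uses (H2) is exactly why the paper can re-derive that decrease directly here.
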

\begin{proof}
 The proximal objective is continuous and coercive, so each proximal subproblem has a global minimizer. Proximal optimality with comparison point $x_k$ and the descent lemma give
 \[
 F_\kappa(x_{k+1})+
 \frac{1-\kappa\beta L}{2\beta}\|x_{k+1}-x_k\|^2
 \leq F_\kappa(x_k).
 \]
 Since $g,h\geq0$ and $\|x\|_2^p\leq\|x\|_p^p$, it follows that $\rho\|x_k\|_2^p\leq E$. Both $F$ and $F_\kappa$ are continuous and coercive and therefore attain their minima. If $x^*\in\argmin F$, then $\rho\|x^*\|_2^p\leq F(x^*)\leq F(0)=\|b\|_2^2$. If $z^*\in\argmin F_\kappa$, then $\rho\|z^*\|_2^p\leq F_\kappa(z^*)\leq F_\kappa(0)=\kappa\|b\|_2^2\leq\|b\|_2^2$. The strict choice of $R$ proves all assertions.
\end{proof}
}

\paragraph{The smooth part $g$.} Take $g(x)=\|Ax-b\|_2^2$. Then $g$ is convex and differentiable with $\nabla g(x) = 2A^\top(Ax-b)$ and $\nabla^2 g(x) = 2 A^\top A \succeq 0$, hence $g$ has $L$-Lipschitz gradient with
\begin{equation}\label{eq:L-value}
L = 2\|A\|_2^2.
\end{equation}
{Its strong-convexity modulus is $\mu=2\lambda_{\min}(A^\top A)\geq0$, which is positive exactly when $A$ has full column rank.}

\paragraph{The regularizer $h=\rho\|\cdot\|_p^p$.} By Proposition \ref{prop:lp-quasar-exact} {(a formulation on a Euclidean ball of \cite[Example~8, Proposition~6$(a)$]{BLL})}, $\overline{x}_{h}=0$, and for every $\kappa \in \, ]0, p[$ and every $R>0$, $\|x \|_p^p$ is $(\kappa, \gamma_0)$-strongly quasar-convex on $K = \overline{B}(0,R)$ with
\[
 \gamma_0 (\kappa, p, R) = \frac{2 (p-\kappa)}{ \kappa} R^{p-2}.
\]
By Lemma \ref{lem:scale} (scaling), $h= \rho \|\cdot\|_p^p$ is then $(\kappa,\gamma)$-strongly quasar-convex on $K$ with respect to the same $\overline{x}_{h}=0$, with modulus
\begin{equation}\label{eq:gamma-value}
 \gamma (\kappa,p,R,\rho) = \rho \gamma_0 (\kappa, p,R) = \frac{2\rho (p-\kappa)}{\kappa}R^{p-2}.
\end{equation}
As noted in \cite[Example~8]{BLL}, {for $n\geq2$ the penalty} $h$ is not quasiconvex. The quasar-convex framework of Section~{\ref{sec:03} applies to this penalty on the confinement ball; no quasar-convexity assumption} is {made on the full objective}.

For the remaining convergence results of this section, we use the following
application hypotheses, denoted collectively by (A):
\begin{enumerate}
\renewcommand{\theenumi}{A\arabic{enumi}}
\renewcommand{\labelenumi}{(\theenumi)}
\item\label{app:model} $A\in\mathbb R^{m\times n}$, $b\in\mathbb R^m$,
$0<p<1$, and $\rho>0$, with $g(x)=\|Ax-b\|_2^2$,
$h(x)=\rho\|x\|_p^p$, $F=g+h$, and $F_\kappa=\kappa g+h$.
Set $L=2\|A\|_2^2$ and $\mu=2\lambda_{\min}(A^\top A)\geq0$.
Here $\rho$ is the regularization weight and $\mu$ is the strong-convexity modulus of $g$.
\item\label{app:iteration} Fix $0<\kappa<p$ and a constant step $0<\beta_k \equiv \beta < 1/(\kappa L)$, with $1/0 := +\infty$. The sequence $\{x_k\}_{k}$ is generated by Algorithm~\ref{alg:pg-sqcv} from $x_1\in\mathbb R^n$, with arbitrary selections of global proximal minimizers, and is extended constantly if the algorithm stops.
Set $E=F_\kappa(x_1)$.
\item\label{app:ball} Choose $R$ satisfying \eqref{eq:lp-radius} and put $K=\overline B(0,R)$ and $\gamma=2\rho(p-\kappa)R^{p-2}/\kappa$, as in \eqref{eq:gamma-value}.
\end{enumerate}
By Proposition~\ref{prop:lp-quasar-exact} and Lemma~\ref{lem:scale}, $h$ is $(\kappa,\gamma)$-strongly quasar-convex on $K$ with respect to $0$
in the sense of Definition~\ref{def:quasar}.

Conditions such as positive restricted curvature or
$A^\top b=0$ are additional hypotheses only where explicitly stated.

The convergence comparisons retain the proximal residual, so no lower restriction on $\beta$ beyond positivity is imposed, and
the choice of $R$ imposes no further upper bound on $\beta$.

The bounds in Proposition~\ref{prop:lp-confinement} place every iterate,
accumulation point and global comparison minimizer strictly inside
$K=\overline B(0,R)$. The proximal comparisons use strong quasar-convexity
only along segments to zero and quadratic growth at comparison points
in $K$, while the radial subgradient inequality remains valid at interior
points. Descent gives $\sum_k\|x_{k+1}-x_k\|^2<+\infty$, and continuity
of $h$ supplies the value convergence needed for stationarity.
Consequently, the proofs of Proposition~\ref{prop:critical-comparison}
and Theorem~\ref{thm:exact} apply locally to the original unconstrained
iteration. When $A=0$, the same arguments apply with $L=\mu=0$;
the gradient map is then the identity, since $x-\kappa\beta\nabla g(x)=x$.

\subsection{{Geometric and Asymptotic Distance Estimates}}\label{sec:lp-distances}

We compare the iterates with global minimizers of $F=g+h$, both through a geometric envelope and through asymptotic distance bounds.
We bound $\|\nabla g(x_k)\|$ explicitly using the confinement $x_k\in\overline{B}(0,R)$:
\begin{align}
 \|\nabla g(x_k)\| & = 2\|A^\top (Ax_k-b)\| \le 2\|A\|_2 \big( \|A\|_2\|x_k\| + \|b\|_2 \big) \notag \\ 
 & \le 2 \|A\|_2 \big( \|A\|_2 R + \|b\|_2 \big) =: M. \label{eq:M-bound}
\end{align}

\begin{corollary}[Explicit distance bounds for power-regularized least squares]\label{cor:lp-exact}
Assume (\ref{app:model})--(\ref{app:ball}), and fix
$x^*\in\argmin_{\mathbb R^n}F$. Let $R_f$ and $C_\mu$ be the
constants in Theorem~\ref{thm:exact}, with $L$, $\mu$ and $\gamma$
as specified in (A). Set
\begin{equation}\label{eq:lp-geometric-rate}
 q=\frac{(2-\kappa)(1-\kappa\beta\mu)}
 {2-\kappa+\kappa\beta\gamma}\in (0,1).
\end{equation}
Then:
\begin{enumerate}
\item[$(a)$] For every $k\geq1$, we have
\begin{equation}\label{eq:lp-geometric-distance}
\begin{aligned}
 &\operatorname{dist}\bigl(x_k,\overline B(x^*,R_f)\bigr)\\
 &\quad\leq q^{k-1}
 \left(\|x_1-x^*\|_2+
 \frac{4(2-\kappa)}{\kappa\gamma}
 \|A^\top(Ax^*-b)\|_2-R_f\right)_+.
\end{aligned}
\end{equation}
\item[$(b)$] The asymptotic distance satisfies
\begin{equation}\label{eq:lp-asymptotic-distance}
 \limsup_k\|x_k-x^*\|_2
 \leq\min\left\{R_f,\ 2C_\mu
               \limsup_k\|A^\top(Ax_k-b)\|_2\right\}.
\end{equation}
\end{enumerate}
\end{corollary}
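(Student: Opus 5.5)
The plan is to specialize Theorem~\ref{thm:exact} to the data in (A). The only real work is checking that its proof, written for global quasar-convexity, still goes through when (H2) holds only on $K=\overline B(0,R)$. First, the substitutions. In (A) we have $\nabla g(x)=2A^\top(Ax-b)$, so $G^*=\|\nabla g(x^*)\|=2\|A^\top(Ax^*-b)\|_2$. This gives $2(2-\kappa)G^*/(\kappa\gamma)=4(2-\kappa)\|A^\top(Ax^*-b)\|_2/(\kappa\gamma)$ and $M_\infty=2\limsup_k\|A^\top(Ax_k-b)\|_2$. The rate $q$ in \eqref{eq:lp-geometric-rate} is the $q$ of Theorem~\ref{thm:exact} after multiplying numerator and denominator by $2-\kappa$. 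It lies in $(0,1)$ because $0<\kappa\beta\mu\le\kappa\beta L<1$ and $\kappa\beta\gamma>0$. With these identifications, \eqref{eq:lp-geometric-distance} is \eqref{eq:geometric-distance-global} and \eqref{eq:lp-asymptotic-distance} is \eqref{conv:ball}.

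Second, the localization, which I expect to be the main obstacle. By Proposition~\ref{prop:lp-confinement}, every iterate $x_k$, every accumulation point, and $x^*$ lie in the open ball $B(0,R)$, and $\overline x_h=0$. I will go through the proof of Theorem~\ref{thm:exact}(a) and list where quasar-convexity is used.

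\begin{itemize}
\item The prox contraction $(1+\beta\kappa\gamma/(2-\kappa))\|u-\overline x_h\|\le\|y-\overline x_h\|$ uses Lemma~\ref{lem:radial-subgradient} at the selected point $u=x_{k+1}\in B(0,R)$.
\item The bound $\|x^*-\overline x_h\|\le(2-\kappa)G^*/(\kappa\gamma)$ uses the radial inequality at $x^*\in B(0,R)$.
\end{itemize}

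The proof of Lemma~\ref{lem:radial-subgradient} only compares $h$ along segments $[u,0]\subset K$ and uses quadratic growth \eqref{qwc:sconvex} on $K$. Both are available for interior $u$, and the limiting subgradient passes through Fréchet subgradients at nearby points, which are also in $K$. Here $h$ is continuous, so value convergence is automatic. The gradient-map contraction $\|x-z-\kappa\beta(\nabla g(x)-\nabla g(z))\|\le(1-\kappa\beta\mu)\|x-z\|$ involves only $g$ and holds globally. It is applied with $x=x_k$ and $z=\overline x_h=0$. Hence the recurrence for $d_k=\|x_k\|$ and the rest of part (a) carry over verbatim, including the bound on $\|\nabla g(0)\|$ through $G^*$ and $L$.

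For part (b), the $R_f$ branch follows from (a). For the $C_\mu M_\infty$ branch I will re-run both routes.

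\begin{itemize}
\item Proposition~\ref{propsumm} applied at $x=x^*$ uses Theorem~\ref{lemma1} with $\overline x_h=0$, $x_{k+1}$ and $x^*$. It requires segments from $x_{k+1}$ to $0$ inside $K$ and quadratic growth at $x^*$, both of which hold.
\item Proposition~\ref{prop:critical-comparison} applied at an accumulation point $u\in B(0,R)$ requires Theorem~\ref{thm:critical}. Its ingredients are Lemma~\ref{lem:kappa-decrease}, boundedness from confinement instead of coercivity via $\gamma$, and Lemma~\ref{lem:subgrad-bound}; none of these uses quasar-convexity. The proposition itself needs only the radial inequality at $u$ and quadratic growth at $x^*$.
\end{itemize}

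The $\limsup$ identity $\limsup_k\|x_{k+1}-y_k\|=\kappa\beta M_\infty$ uses $x_{k+1}-x_k\to0$ from Lemma~\ref{lem:bounded}(b), which holds since $F_\kappa$ is bounded below by $0$.

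Finally, the degenerate case $A=0$: then $L=\mu=0$, the step condition reads $\beta<+\infty$, and the gradient map is the identity. Here I will simply note that the estimates hold with $G^*=0$, so $R_f=0$. Combining the two parts gives the corollary.
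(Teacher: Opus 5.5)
Your proposal is correct and follows the paper's own route. You substitute $G^*=2\|A^\top(Ax^*-b)\|_2$ and $M_\infty=2\limsup_k\|A^\top(Ax_k-b)\|_2$ into Theorem~\ref{thm:exact}, with Proposition~\ref{prop:lp-confinement} placing all iterates, accumulation points and $x^*$ in the interior of $K$, and your itemized check of where quasar-convexity enters spells out the localization paragraph at the end of Section~\ref{sec:lp-model}. One trivial slip: $\mu=2\lambda_{\min}(A^\top A)$ may vanish, so the bound should read $0\le\kappa\beta\mu$ rather than $0<\kappa\beta\mu$, which does not affect $q\in(0,1)$.
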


\begin{proof}
\noindent$(a)$ Proposition~\ref{prop:lp-confinement} places every iterate,
accumulation point and global comparison minimizer strictly inside
$K$, so the proof of Theorem~\ref{thm:exact} applies locally.
Since $\nabla g(x)=2A^\top(Ax-b)$, we have
$G^*=2\|A^\top(Ax^*-b)\|_2$. Substituting this expression and
the values of $L$, $\mu$ and $\gamma$ from (A) into
Theorem~\ref{thm:exact}$(a)$ gives $q$ and
\eqref{eq:lp-geometric-distance}.

\noindent$(b)$ With
$M_\infty=2\limsup_k\|A^\top(Ax_k-b)\|_2$,
Theorem~\ref{thm:exact}$(b)$ yields
\eqref{eq:lp-asymptotic-distance}.
\end{proof}

\begin{remark}
 The bound \eqref{eq:M-bound} supplies an explicit estimate of the gradient term in \eqref{eq:lp-asymptotic-distance} from the confinement radius. The estimates incorporate the strong-convexity modulus $\mu$, including when $A$ has full column rank. When $\mu=0$, the coefficient $2C_\mu$ reduces to
\[
 \frac{(2-\kappa)+\sqrt{2(2-\kappa)}}{\rho(p-\kappa)}R^{2-p}.
\]
The same localization of Proposition \ref{prop:critical-comparison} also gives
\[
 \|u-x^*\|_2\leq 2C_\mu\|A^\top(Au-b)\|_2
\]
for every critical point $u$ of $F_\kappa$ in $B(0,R)$ and every
global minimizer $x^*$ of $F$.
\end{remark}

{
\subsection{Support Identification and Convergence}\label{sec:lp-convergence}

We next use the threshold structure of the power penalty to establish convergence of the whole sequence to a critical point of $F_\kappa$. We write $\operatorname{supp}(x)=\{i:x_i\ne0\}$. For $S \subset \{1,\ldots,n\}$, $x_S$ is the vector of coordinates indexed by $S$ and $A_S$ is the matrix formed by the corresponding columns of $A$.

\begin{theorem}[Support identification and full-sequence convergence]\label{thm:lp-identification}
 Under (\ref{app:model})--(\ref{app:ball}), put $a_\beta=[2\beta\rho(1-p)]^{1/(2-p)}$. Every nonzero coordinate $x_{k,i}$ of $x_k$, with $k\geq2$, has magnitude at least $a_\beta$, that is, $|x_{k,i}| \geq a_\beta$. The support and coordinate signs therefore become constant after finitely many iterations. Denote the eventual support by $S$. Moreover,
 \[
 \sum_k\|x_{k+1}-x_k\| < + \infty,\qquad x_k \to \widehat x, \qquad 0 \in \kappa\nabla g(\widehat x) + \partial^Lh(\widehat x).
 \]
 These conclusions hold for every real $p \in \, ]0, 1[$.
\end{theorem}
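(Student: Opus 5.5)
The plan has three stages: a lower bound on nonzero coordinates from the scalar proximal problem, eventual constancy of support and signs from the vanishing step lengths, and a KL argument on the resulting smooth restricted problem.

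\emph{Threshold bound.} The proximal subproblem for $h=\rho\|\cdot\|_p^p$ is separable. Each coordinate $x_{k+1,i}$ is a global minimizer of $\phi(t)=\rho|t|^p+\frac{1}{2\beta}(t-y_{k,i})^2$.

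If $t\neq0$ is such a minimizer, it is a local minimizer of a function that is $C^2$ near $t$. Second-order necessity then gives
\[
\frac{1}{\beta}+\rho p(p-1)|t|^{p-2}\geq0,
\qquad\text{that is,}\qquad
|t|^{2-p}\geq\beta\rho p(1-p).
\]
This is weaker than the claimed constant $a_\beta=[2\beta\rho(1-p)]^{1/(2-p)}$, so I will also use global optimality against $0$. Write $s=\operatorname{sign}(t)$ and assume $y_{k,i}$ has the same sign as $t$; otherwise moving $t$ to $0$ strictly lowers $\phi$.

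The first-order condition gives $y_{k,i}=t+\beta\rho p\,s|t|^{p-1}$. Comparing with $0$, that is, $\phi(t)\le\phi(0)$, gives
\[
\rho|t|^p\le \frac{1}{2\beta}\big(y_{k,i}^2-(t-y_{k,i})^2\big)
=\frac{1}{2\beta}\big(2|t||y_{k,i}|-t^2\big).
\]
Substituting $|y_{k,i}|=|t|+\beta\rho p|t|^{p-1}$ turns this into
\[
\rho|t|^p\le \frac{t^2}{2\beta}+\rho p|t|^p,
\qquad\text{so}\qquad
|t|^{2-p}\geq 2\beta\rho(1-p)=a_\beta^{2-p}.
\]
This is exactly the stated bound. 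It applies to $x_k$ for every $k\ge2$, since each such iterate is a proximal output.

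\emph{Support and sign identification.} By Lemma~\ref{lem:kappa-decrease}, the argument of Lemma~\ref{lem:bounded}(b) (using only descent and $F_\kappa\ge0$), and Proposition~\ref{prop:lp-confinement}, we have $\|x_{k+1}-x_k\|\to0$. Hence eventually $\|x_{k+1}-x_k\|<a_\beta$. A change of support, or a sign flip, at coordinate $i$ would force $|x_{k+1,i}-x_{k,i}|\ge a_\beta$, which is impossible. So for $k\ge k_0$ the support $S$ and the sign vector $\sigma$ are frozen, and $|x_{k,i}|\ge a_\beta$ for all $i\in S$.

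\emph{Whole-sequence convergence.} For $k\ge k_0$ the iterates live in the relatively open set
\[
U=\{x:\ x_{S^c}=0,\ \sigma_i x_i>a_\beta/2 \text{ for } i\in S\}.
\]
On this set, and in the variables $z=x_S$, the iteration is proximal gradient for the restricted function
\[
\Psi(z)=\kappa\|A_Sz-b\|^2+\rho\sum_{i\in S}(\sigma_iz_i)^p .
\]
This function is real-analytic on the open orthant piece $\{z:\ \sigma_iz_i>a_\beta/2\}$, which contains every tail iterate and, by closedness, every accumulation point.

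I then check the three hypotheses of \cite[Theorem~2.9]{ABS} for $\{z_k\}_{k\ge k_0}$:
\begin{itemize}
\item Sufficient decrease follows from Lemma~\ref{lem:kappa-decrease}.
\item Relative error follows from the restriction of Lemma~\ref{lem:subgrad-bound}. On $U$ the coordinate subgradients over $S$ are exact gradients, so $\|\nabla\Psi(z_{k+1})\|\le(\kappa L+1/\beta)\|x_{k+1}-x_k\|$.
\item Continuity holds because $\Psi$ is continuous.
\end{itemize}
The function $\Psi$ is bounded on the bounded iterate set, and it has the KL property at every accumulation point because it is analytic there. Hence $\sum_k\|x_{k+1}-x_k\|<\infty$ and $x_k\to\widehat x$.

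Criticality of $\widehat x$ for $F_\kappa$ follows from Theorem~\ref{thm:critical} in its localized form, valid via confinement in $K$. Nothing in the argument used a restriction on $p$ beyond $0<p<1$.

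The main obstacle is the sharp threshold constant $2\beta\rho(1-p)$ rather than the easier second-order constant $\beta\rho p(1-p)$. The key is to combine the comparison against $0$ with the first-order identity for $y_{k,i}$, and to dispose of the sign case by the same comparison.
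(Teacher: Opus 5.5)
Your proposal is correct and follows essentially the same route as the paper. You combine the first-order identity with the comparison $\phi(t)\le\phi(0)$ to get the gap $a_\beta$, use vanishing steps to freeze support and signs, and apply \cite[Theorem~2.9]{ABS} to the analytic restricted function on the fixed orthant; the only differences are that you get criticality of $\widehat x$ from the localized Theorem~\ref{thm:critical} instead of reading it off from the vanishing restricted gradient and $\partial^L(\rho|\cdot|^p)(0)=\mathbb R$, and that the empty-support case (eventually $x_k=0$) is left implicit.
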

\begin{proof}
 The proximal problem separates over coordinates. A nonzero scalar minimizer $u$ of $\rho|v|^p+(v-z)^2/(2\beta)$ satisfies $z=u+\beta\rho p\operatorname{sign}(u)|u|^{p-1}$. Comparing its value with the value at zero gives
 \[
 0\geq\rho|u|^p+\frac{u^2-2uz}{2\beta}
       =\rho(1-p)|u|^p-\frac{u^2}{2\beta},
 \]
 hence $|u|\geq a_\beta$. Since $\|x_{k+1}-x_k\|\to0$, transitions between zero and nonzero coordinates, which produce jumps of magnitude at least $a_\beta$, eventually cease; sign changes between nonzero coordinates produce jumps of magnitude at least $2a_\beta$ and also cease.
 If the final support $S$ is empty, the sequence is eventually zero. Otherwise, the restricted iterates $x_{k,S}$ eventually lie in a compact subset of a fixed sign orthant, with every active coordinate bounded away from zero. On this orthant
 \[
 \Phi_S(u)=\kappa\|A_Su-b\|_2^2+\rho\sum_{i\in S}|u_i|^p
 \]
 is real analytic for every real $p \in \, ]0, 1[$. Proximal optimality gives
 \[
 \|\nabla\Phi_S(x_{k+1,S})\|
 \leq(\beta^{-1}+\kappa L)\|x_{k+1}-x_k\|.
 \]
 Along the identified tail, $\Phi_S(x_{k,S}) = F_\kappa (x_k)$; continuity therefore gives the same limiting objective value at every accumulation point. Sufficient decrease, the relative-gradient estimate and the local analytic gradient inequality then give finite length and convergence by the KL convergence argument \cite[Theorem~2.9]{ABS}. The restricted gradient vanishes at the limit, while $\partial^L(\rho|\cdot|^p)(0)=\mathbb R$ on the complementary coordinates, proving the stated inclusion. The support and restricted-KL mechanisms are consistent with \cite{ZLWX,ZLX}.
\end{proof}

\begin{corollary}[Eventual linear rate]\label{cor:lp-local-linear}
 Assume (\ref{app:model})--(\ref{app:ball}). Let $S$ and $\widehat x$ be as in Theorem~\ref{thm:lp-identification}. If $S\ne\emptyset$ and
 \[
 H_S=2\kappa A_S^\top A_S-\rho p(1-p)
       \operatorname{Diag}(|\widehat x_i|^{p-2})_{i\in S}\succ0,
 \]
 then there exist $C>0$, $q_\ell \in \, ]0, 1[$ and $K$ such that $\|x_k-\widehat x\|\leq Cq_\ell^{k-K}$ for all $k \geq K$.
\end{corollary}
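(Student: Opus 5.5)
The plan is to reduce the eventual dynamics to a smooth fixed-point iteration on the identified support and then apply a local contraction argument.

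By Theorem~\ref{thm:lp-identification}, there is $K_0$ such that, for $k\geq K_0$, $\operatorname{supp}(x_k)=S$ with fixed signs. Every active coordinate has magnitude at least $a_\beta$, and $x_k\to\widehat x$. Hence $\widehat x_S$ lies in the interior of the same sign orthant with $|\widehat x_i|\geq a_\beta$ for $i\in S$, and the off-support coordinates of $x_k$ are zero for $k\geq K_0$. It therefore suffices to prove linear convergence of $u_k:=x_{k,S}$ to $\widehat u:=\widehat x_S$.

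On the orthant, $\Phi_S(u)=\kappa\|A_Su-b\|_2^2+\rho\sum_{i\in S}|u_i|^p$ is $C^2$ near $\widehat u$, with $\nabla^2\Phi_S(\widehat u)=H_S\succ0$ and $\nabla\Phi_S(\widehat u)=0$. For $k\geq K_0$, the scalar optimality condition from the proof of Theorem~\ref{thm:lp-identification} reads $z_{k,S}=u_{k+1}+\beta\nabla\psi(u_{k+1})$. Here $\psi(u)=\rho\sum_{i\in S}|u_i|^p$ and $z_{k,S}=u_k-\kappa\beta\nabla_S g(x_k)=u_k-\beta\nabla\chi(u_k)$, where $\chi(u)=\kappa\|A_Su-b\|^2$ because the off-support coordinates vanish. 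The update is thus
\[
 u_{k+1}+\beta\nabla\psi(u_{k+1})=u_k-\beta\nabla\chi(u_k).
\]

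The approach to a rate is to use strong convexity of $\Phi_S$ near $\widehat u$ rather than to invert $I+\beta\nabla\psi$, which may be singular since $\psi$ is concave. Choose a ball $U$ around $\widehat u$ on which $\nabla^2\Phi_S\succeq\sigma I$ for some $\sigma>0$, so $\Phi_S$ has the quadratic growth and error bound $\Phi_S(u)-\Phi_S(\widehat u)\leq\|\nabla\Phi_S(u)\|^2/(2\sigma)$ there. This is a KL inequality with exponent $1/2$. For $k\geq K_1$ the iterates lie in $U$, and Lemma~\ref{lem:kappa-decrease} gives sufficient decrease
\[
 r_k-r_{k+1}\geq c_0\|u_{k+1}-u_k\|^2,\qquad r_k:=\Phi_S(u_k)-\Phi_S(\widehat u)=F_\kappa(x_k)-F_\kappa(\widehat x)\geq0,
\]
and the relative-gradient bound $\|\nabla\Phi_S(u_{k+1})\|\leq(\beta^{-1}+\kappa L)\|u_{k+1}-u_k\|$. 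Combining these gives
\[
 r_{k+1}\leq\frac{(\beta^{-1}+\kappa L)^2}{2\sigma c_0}\,(r_k-r_{k+1}),
\]
so $r_{k+1}\leq\frac{\tau}{1+\tau}r_k$ with $\tau=(\beta^{-1}+\kappa L)^2/(2\sigma c_0)$. The function values therefore converge Q-linearly.

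To pass from function values to iterates, use the standard tail-length argument. The sum $\sum_{j\geq k}\|u_{j+1}-u_j\|$ is bounded by sums of $\sqrt{(r_j-r_{j+1})/c_0}\leq\sqrt{r_j/c_0}$, which form a geometric series. This gives $\|u_k-\widehat u\|\leq C'\sqrt{r_k}\leq C q_\ell^{k-K}$ with $q_\ell=\sqrt{\tau/(1+\tau)}$. Alternatively, quadratic growth $r_k\geq\frac\sigma2\|u_k-\widehat u\|^2$ gives the iterate bound directly.

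The main obstacle is the localization step. One must justify that $\Phi_S$ is $C^2$ on a neighborhood that the tail of $\{u_k\}$ enters, and that $r_k\geq0$ there, so that $\widehat u$ is a local minimizer. This follows from $H_S\succ0$ together with the fact that $r_k$ decreases to $0$, which holds because $F_\kappa(x_k)\downarrow F_\kappa(\widehat x)$ by continuity.
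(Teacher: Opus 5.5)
Your proposal is correct and follows the same route as the paper: the paper notes that $H_S=\nabla^2\Phi_S(\widehat x_S)\succ0$ gives the KL inequality with exponent $\theta=1/2$ at $\widehat x_S$, and then invokes the linear-rate case of the KL argument, using sufficient decrease and the relative-gradient estimate. You make that argument explicit through the local error bound $\Phi_S(u)-\Phi_S(\widehat u)\leq\|\nabla\Phi_S(u)\|^2/(2\sigma)$, the Q-linear recursion $r_{k+1}\leq\frac{\tau}{1+\tau}r_k$, and local quadratic growth to pass from function values to iterates.
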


\begin{proof}
 Since $H_S=\nabla^2\Phi_S(\widehat x_S)$ is positive definite, $\Phi_S$ satisfies the KL inequality of Definition~\ref{def:KL} with exponent $\theta=1/2$ at $\widehat x_S$. Sufficient decrease and the preceding relative-gradient estimate then yield the linear-rate case of the same KL argument.
\end{proof}

}

{The common-minimizer case can be identified explicitly and gives finite termination.}

\begin{lemma}\label{lem:degenerate}
 {Under (\ref{app:model})}, if $A^\top b = 0$, then $x^* = \overline{x}_{h} = 0$ is the unique global minimizer of $F=g+h$ over $\mathbb{R}^n$.
 {Conversely, if $x^*\in\argmin F$ and $\nabla g(x^*)=0$, then $x^*=0$ and $A^\top b=0$.}
\end{lemma}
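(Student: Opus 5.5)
For the forward direction, assume $A^\top b=0$. The plan is to show directly that $F(x)>F(0)$ for every $x\neq0$. Expanding the least-squares term gives
\[
 g(x)=\|Ax\|_2^2-2\langle A^\top b,x\rangle+\|b\|_2^2=\|Ax\|_2^2+\|b\|_2^2\geq\|b\|_2^2=g(0),
\]
so $0$ minimizes $g$. Since $h=\rho\|\cdot\|_p^p$ satisfies $h(x)>0=h(0)$ for every $x\neq0$, we get
\[
 F(x)=g(x)+h(x)\geq g(0)+h(x)>g(0)+h(0)=F(0)\qquad(x\neq0).
\]
Hence $0$ is the unique global minimizer of $F$. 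By Proposition~\ref{prop:lp-quasar-exact}, $\overline x_h=0$, which gives $x^*=\overline x_h=0$. This direction needs no quasar-convexity or confinement, only the elementary expansion.

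For the converse, let $x^*\in\argmin F$ with $\nabla g(x^*)=0$. Since $g$ is convex, $x^*$ is then a global minimizer of $g$, so $g(x^*)\leq g(0)$. Optimality of $x^*$ for $F$ gives $F(x^*)\leq F(0)=g(0)$. I would try to deduce $h(x^*)=0$ from these two facts, but they alone do not suffice; I need something in the other direction. The right move is to swap to the other inequality: $x^*$ minimizes $g$, so $g(0)\geq g(x^*)$, while $F(x^*)\le F(0)$ means $g(x^*)+h(x^*)\leq g(0)$. This still leaves room, so I would instead argue that $0$ is also a minimizer of $g$.

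Concretely, the minimizers of the convex quadratic $g$ form the affine set $x^*+\ker A$. The function $x\mapsto \|x\|_p^p$ restricted to this affine set must then be minimized at $x^*$. The cleaner route is a first-order one: Fermat's rule at $x^*$ gives $0\in\nabla g(x^*)+\partial^L h(x^*)=\partial^L h(x^*)$. For $h=\rho\|\cdot\|_p^p$, every nonzero coordinate $x^*_i$ contributes the component $\rho p\,\mathrm{sign}(x^*_i)|x^*_i|^{p-1}\neq0$, since $h$ is smooth in that coordinate near $x^*$ and the limiting subdifferential is separable there. So $0\in\partial^L h(x^*)$ forces every coordinate to vanish, i.e.\ $x^*=0$. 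Alternatively, one can apply Lemma~\ref{lem:radial-subgradient} locally with $v=0$, which gives $0\geq\frac{\kappa\gamma}{2-\kappa}\|x^*\|^2$; this is valid because Proposition~\ref{prop:lp-confinement} puts $x^*$ inside a ball on which quasar-convexity holds. I would use this radial argument as the main route, since it is already proved. Then $\nabla g(0)=-2A^\top b=0$, so $A^\top b=0$.

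The main obstacle is justifying the use of the radial inequality in the localized setting. Proposition~\ref{prop:lp-confinement} supplies a ball $K=\overline B(0,R)$ that contains $x^*$ in its interior, and the segment from $x^*$ to $0$ stays in $K$, so the Fréchet-subgradient argument of Lemma~\ref{lem:radial-subgradient} goes through unchanged. If that justification feels delicate, the coordinatewise computation of $\partial^L h$ away from zero is the fallback: it shows directly that $0\notin\partial^L h(x)$ for $x\neq0$.
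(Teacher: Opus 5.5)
Your argument is correct, and its backbone matches the paper. The forward direction is the paper's, except that you certify $0\in\argmin g$ via the expansion $g(x)=\|Ax\|_2^2+\|b\|_2^2$ rather than via $\nabla g(0)=-2A^\top b=0$ and convexity. Your ``fallback'' for the converse is exactly the paper's proof: a nonzero $x_i^*$ would force $\rho p\operatorname{sign}(x_i^*)|x_i^*|^{p-1}=-\partial_i g(x^*)=0$.

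Your preferred radial route is genuinely different. It is the estimate $\|x^*-\overline x_h\|\le(2-\kappa)G^*/(\kappa\gamma)$ from the proof of Theorem~\ref{thm:exact}$(a)$, specialized to $G^*=0$. It thus exhibits the converse as an instance of the fact that a strongly quasar-convex function has $0$ as a Fr\'echet subgradient at no point of $K$ other than its center, so it would transfer verbatim to any such regularizer. The price is data that (\ref{app:model}) does not supply. You must fix some $\kappa\in\,]0,p[$ and a radius $R$, and use a localized form of Lemma~\ref{lem:radial-subgradient}, which the paper proves only under the global hypothesis (\ref{hyp:regularizer}).

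The delicacy you flag is smaller than you fear. Since $x^*$ minimizes $g+h$ with $\nabla g(x^*)=0$, one has $h(z)-h(x^*)\ge g(x^*)-g(z)=o(\|z-x^*\|_2)$, so $0$ is a Fr\'echet subgradient of $h$ at $x^*$. No limiting passage, and hence no interiority, is needed; you only need $x^*\in\overline B(0,R)$. Any $R\ge\|x^*\|_2$ works because Proposition~\ref{prop:lp-quasar-exact} holds for every $R>0$, so Proposition~\ref{prop:lp-confinement} is not needed either. The first inequality of the lemma with $v=0$ then gives $0\ge\kappa h(x^*)+\frac{\kappa\gamma}{2}\|x^*\|_2^2$, i.e.\ $x^*=0$.

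Still, the paper's coordinatewise computation uses only (\ref{app:model}) and is shorter, so it is the better choice for this lemma. The middle paragraph of abandoned attempts (comparing $g(x^*)\le g(0)$ with $F(x^*)\le F(0)$, and the affine set $x^*+\ker A$) contributes nothing and should be deleted.
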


\begin{proof}
 Since $A^\top b=0$, $\nabla g(0) = -2A^\top b = 0$; as $g$ is convex, $0 \in \argmin_{\R^n} g$, i.e., $g(0)=g^*=\|b\|_2^2$. For every $x\ne0$, $g(x)\ge g^*=g(0)$ and $h(x)=\rho\|x\|_p^p > 0 = h(0)$ (as $p>0$), so $F(x)=g(x)+h(x) > g(0)+h(0)=F(0)$. Hence $0$ is the unique global minimizer of $F$, and since $\overline{x}_{h}=0$ as well, $x^*=\overline{x}_{h}=0$.
 {For the converse, if $x^*\in\argmin F$ and $\nabla g(x^*)=0$, then a nonzero coordinate would satisfy the necessary condition $\rho p\operatorname{sign}(x_i^*)|x_i^*|^{p-1}=0$, a contradiction. Thus $x^*=0$ and $A^\top b=0$.}
\end{proof}

\begin{corollary}[{Finite termination in the case $A^\top b=0$}] \label{cor:lp-exact-recovery}
 {Assume (\ref{app:model})--(\ref{app:ball}).} Suppose $A^\top b=0$.
 {Then the algorithm reaches zero in finitely many iterations. More precisely, any $k\geq2$ such that
 \[
  q^{k-1}\|x_1\|_2<a_\beta,
 \]
 with $q$ from \eqref{eq:lp-geometric-rate} and $a_\beta$ from Theorem~\ref{thm:lp-identification}, satisfies $x_j=0$ for every $j\geq k$.}
\end{corollary}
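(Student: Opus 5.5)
The plan combines the geometric envelope of Corollary~\ref{cor:lp-exact}$(a)$ in its common-minimizer form with the threshold property of Theorem~\ref{thm:lp-identification}.

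First, Lemma~\ref{lem:degenerate} shows that $A^\top b=0$ makes $x^*=\overline{x}_h=0$ the unique global minimizer of $F$. Moreover $\nabla g(0)=-2A^\top b=0$, so $G^*=2\|A^\top(Ax^*-b)\|_2=0$.

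Second, we invoke the $G^*=0$ case of Theorem~\ref{thm:exact}, localized to $K=\overline B(0,R)$ as in Corollary~\ref{cor:lp-exact}. Then $R_f=0$ and $\|x_k-0\|\le q^{k-1}\|x_1\|_2$ for every $k\geq1$, with $q$ from \eqref{eq:lp-geometric-rate}. The localization is justified because Proposition~\ref{prop:lp-confinement} keeps every iterate in $B(0,R)$. The contraction argument in part $(a)$ only uses:
\begin{itemize}
\item the radial inequality of Lemma~\ref{lem:radial-subgradient} at proximal points $u\in B(0,R)$ relative to $\overline{x}_h=0$;
\item the Lipschitz estimate for $I-\kappa\beta\nabla g$.
\end{itemize}
With $\nabla g(\overline{x}_h)=0$, the additive term in the recursion $d_{k+1}\le q d_k+\cdots$ vanishes. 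Hence $d_k\le q^{k-1}d_1$ directly, and the full $R_f$ machinery is not needed.

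Third, fix $k\geq2$ with $q^{k-1}\|x_1\|_2<a_\beta$. By Theorem~\ref{thm:lp-identification}, every nonzero coordinate of an iterate $x_j$ with $j\geq2$ has magnitude at least $a_\beta$. For $j\geq k$ we also have $q^{j-1}\le q^{k-1}$, because $q\in\,]0,1[$. Therefore every coordinate satisfies $|x_{j,i}|\le\|x_j\|_2\le q^{j-1}\|x_1\|_2<a_\beta$, which forces each coordinate to be zero, so $x_j=0$.

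Finally, we note consistency with the stopping rule. If $x_{k}=0$, then $y_k=0$, and $\operatorname{Prox}_{\beta h}(0)=\{0\}$ because $h\geq0$ vanishes only at $0$. So the iteration stays at zero and Step~2 stops, which is compatible with the constant extension convention.

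The main obstacle is making the localization of Theorem~\ref{thm:exact}$(a)$ rigorous. The radial subgradient inequality there is derived from quasar-convexity along segments from $u$ to $0$, and these segments stay in $K$ because $K$ is a ball centered at $0$ and $u\in B(0,R)$ by confinement. I would check this explicitly rather than rely on the global statement. A secondary point is $q>0$ and $q<1$, which requires $\kappa\beta\mu<1$. This follows from $\beta<1/(\kappa L)$ and $\mu\le L$; when $A=0$ we have $q=(2-\kappa)/(2-\kappa+\kappa\beta\gamma)$.
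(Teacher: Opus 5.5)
Your proposal is correct and follows essentially the same route as the paper. Lemma~\ref{lem:degenerate} gives $x^*=0$ and $G^*=0$, the common-minimizer case of the localized Theorem~\ref{thm:exact} gives $\|x_k\|_2\le q^{k-1}\|x_1\|_2$, and the gap $a_\beta$ from Theorem~\ref{thm:lp-identification} forces the iterates to vanish. The only difference is minor: you apply the envelope bound at every $j\geq k$ using monotonicity of $q^{j-1}$, whereas the paper concludes $x_k=0$ and then uses $\operatorname{Prox}_{\beta h}(0)=\{0\}$ to keep later iterates at zero. You include both arguments.
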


\begin{proof}
 By Lemma \ref{lem:degenerate}, $x^*=0$ and $G^*=0$. Thus $R_f=0$, and Corollary~\ref{cor:lp-exact}$(a)$ gives $\|x_k\|_2 \leq q^{k-1} \|x_1\|_2$. The nonzero-coordinate gap in Theorem \ref{thm:lp-identification} therefore forces $x_k=0$ whenever the stated inequality holds. Since $0<q<1$ and $a_\beta > 0$, such an iteration is reached in finite time. The next proximal subproblem has input zero and unique minimizer zero, so all subsequent iterates remain zero.
\end{proof}

\section{Conclusions}\label{sec:conclusions}

We developed quantitative proximal-gradient comparisons for a convex smooth term and a possibly nonconvex strongly quasar-convex regularizer. The arbitrary-reference proximal inequality yields optimized composite distance estimates. A radial argument also gives a geometric envelope around every global minimizer of the unweighted objective, with a radius independent of the step size. The critical-point comparison extends the best asymptotic comparison constant to all values of $\mu\geq0$, and a common minimizer yields global linear convergence. Critical-point convergence concerns the weighted objective decreased by the iteration, while the distance estimates use global minimizers of the unweighted objective as reference points. The weighting example shows that the two notions of criticality can differ.

For power-regularized least squares, descent provides the ball needed for local strong quasar-convexity. Identification of the support and signs after finitely many iterations, combined with analyticity on the identified support, yields whole-sequence convergence to a critical point of the weighted objective for every real power between zero and one. Positive restricted curvature yields an eventual linear rate. Future work may address inexact proximal solutions and adaptive parameter selection while preserving these comparisons.

\section{Declarations}

%\subsection{Ethical Approval and Consent to participate}

%Not Applicable.

%\subsection{Consent for publication}

%Not Applicable.

%\subsection{Human and Animal Ethics}

%Not Applicable.

\subsection{Availability of Supporting Data}

No data sets were generated during the current study.

\subsection{Author Contributions}

 All authors contributed equally to the conception, design and implementation of the study and drafted and revised the manuscript.

\subsection{Conflicts of Interest}

There are no conflicts of interest or competing interests related to this manuscript.

\subsection{Funding}

This work has been partially su\-ppor\-ted by ANID---Chile through Fondecyt Regular 1241040 (Lara).

% Manual bibliography.

\end{document}